\documentclass[12pt,leqno]{amsart}
\usepackage{amssymb,amsthm,amsmath,latexsym}
\usepackage{xcolor}
\usepackage{tikz}
\usepackage{booktabs}
\usepackage{tikz}
\usepackage{academicons}
\usepackage{scalerel}
\usetikzlibrary{svg.path}
\usetikzlibrary{trees}

\usepackage[pdfstartview=XYZ,
bookmarks=true,
colorlinks=true,
linkcolor=blue,
urlcolor=blue,
citecolor=blue,
pdftex,
bookmarks=true,
linktocpage=true, hyperindex=true
]{hyperref}
\usepackage{orcidlink}

\DeclareMathOperator{\Aut}{Aut}
\DeclareMathOperator{\charac}{char}

\newtheorem{theorem}{\sc Theorem}[section]

\newtheorem{lemma}[theorem]{\sc Lemma}

\newtheorem{prop}[theorem]{\sc Proposition}
\newtheorem{corollary}[theorem]{\sc Corollary}
\newtheorem{rem}[theorem]{\sc Remark}
\newtheorem{ex}[theorem]{\sc Example}

\theoremstyle{definition}

\begin{document}

\title[Finite Non-$p$-Groups with 5 Automorphism Orbits]{Finite Non-$p$-Groups with 5 Automorphism Orbits}
\author[Kato]{Júlia Kato \orcidlink{0009-0002-5377-3595}}
\address{Departamento de Matem\'atica, Universidade de Bras\'ilia,
Bras\'ilia-DF, 70910-900 Brazil}
\email{(Kato) juliakato@mat.unb.br}

\subjclass[2020]{
20D45, 20D10, 51E23 }
\keywords{finite groups, automorphism orbits, solvable groups, AT-groups, CPPO-groups, EPPO-groups, Frobenius groups, Fitting height, spreads}
\begin{abstract}
        We obtain a characterization of finite non-$p$-groups with five automorphism orbits. In the solvable case of Fitting height $3$, the groups are precisely $S_4$ and $C_2^4\rtimes D_{10}$. Moreover, the corresponding fixed-point subspaces form a spread in a finite projective space. For Fitting height $2$, we distinguish the possible structures according to the number of prime divisors of $|G|$.
        \end{abstract}

\maketitle

\section{Introduction}

For a group $G$, denote by $\omega(G)$ the number of \emph{automorphism orbits} of $G$, that is, the number of orbits of the natural action of $\operatorname{Aut}(G)$ on $G$. A finite group is called a \emph{k-orbit group} if it has exactly k automorphism orbits.
It is interesting to ask what we can say about $G$ only knowing $\omega(G)$. It is obvious that $\omega(G)=1$ if and only if $G=\{1\}$, and it is well-known that if $G$ is a finite group then $\omega(G)=2$ if and only if $G$ is elementary abelian.

Laffey and MacHale \cite{LM86} classified all finite solvable non-$p$-groups with $\omega(G)=3$. The case of finite $p$-groups is  more delicate.  
Recently, finite 2-groups that are 3-orbit groups were classified by Bors and Glasby \cite{BG}, while 3-orbit $p$-groups for odd prime $p$ were classified by Li and Zhu \cite{LZ}, completing the classification of finite $3$-orbit groups (see also Glasby \cite{G26}).  Laffey and MacHale in \cite{LM86} also described the structure of finite  non-$p$-groups with $\omega(G)=4$.

The class of finite groups with exactly five automorphism orbits has not yet been completely classified. The non-solvable case has been explored; 
see \cite{JKO, 
Koh02, Koh04, Str99}.
Moreover, Dantas, Garonzi and Bastos \cite{DGB17} classified all finite non-solvable groups with $\omega(G)\leq 6$, completing the classification of the non-solvable groups with five and six automorphism orbits.

The new contribution of this paper is characterization of the solvable non-\(p\)-group case with \(\omega(G)=5\), including a complete determination of the Fitting-height \(3\) case and a description of the Fitting-height \(2\) case. An important part of our approach is to relate the condition $\omega(G)=5$ to the classes of EPPO-groups and CPPO-groups.

Recall that a finite group $G$ is called an \emph{EPPO-group} (respectively, a \emph{CPPO-group}) if every element (respectively, every commutator) of $G$ has prime-power order. By a commutator in $G$, we mean an element of the form $[x,y] = x^{-1}y^{-1}xy$, where $x,y \in G$. As usual, $G'$ stands for the commutator subgroup of $G$. Finite EPPO-groups and CPPO-groups have been studied in \cite{SS24,FS25,Hig57}. 

Recall that the Fitting height of a finite solvable group is the minimal number $ h(G)$ such that $G$ possesses a normal series of length $h$ all of whose factors are nilpotent. Let $S_n$, $A_n$, $C_n$ and $D_n$ denote, respectively, the symmetric group on $n$ letters, the alternating group on $n$ letters, the cyclic group of order $n$, and the dihedral group of order $n$. In what follows, we write $\gamma_{\infty}(G)$ to denote the intersection of the lower central series of a group $G$. Throughout the paper, all groups are assumed to be finite.

Our main result is  divided into three parts. We first describe the more rigid structural case. 

    \begin{theorem}\label{thm:main}
    Let $G$ be a solvable non-$p$-group with  $\omega(G)=5$. Then $h(G)=3$ if and only if $G\cong S_4 \text{ or } G\cong C_2^4\rtimes D_{10}.$
    \end{theorem}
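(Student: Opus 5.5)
The plan is to treat the two directions separately. The \emph{if} direction is a direct check. For $S_4$ the automorphism orbits are the conjugacy classes determined by cycle type: $\{1\}$, transpositions, double transpositions, 3-cycles and 4-cycles. That gives $\omega(S_4)=5$ and $h(S_4)=3$, using the series $1<V_4<A_4<S_4$. For $G=C_2^4\rtimes D_{10}$, I would view $V=C_2^4$ as $\mathbb F_{16}$ with $C_5$ acting by multiplication by fifth roots of unity and the involution of $D_{10}$ acting as the Frobenius $x\mapsto x^4$; equivalently $D_{10}\le \Gamma L(1,16)$. The orbits should then be:
\begin{itemize}
\item the identity;
\item $V\setminus\{1\}$, which is a single orbit once the automorphisms induced by $N_{GL(4,2)}(D_{10})$, which contains $\Gamma L(1,16)$, are included;
\item the elements of order $5$;
\item the involutions outside $V$, one orbit since all complements are conjugate under $H^1$ considerations together with $\Aut$;
\item the elements of order $4$.
\end{itemize}
Since $x\mapsto x^4$ fixes $\mathbb F_4$ pointwise, $C_V(t)$ has order $4$ and elements $vt$ of order $4$ exist. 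Finally, $h=3$ follows from $F(G)=V$ and $G/V\cong D_{10}$ being non-nilpotent.

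For the \emph{only if} direction, let $G$ be solvable with $\omega(G)=5$, $h(G)=3$ and $|G|$ not a prime power. Let $N$ be a minimal characteristic subgroup. Then $N$ is elementary abelian, the nontrivial elements of $N$ form a single orbit, and $G/N$ has at most $4$ orbits, since orbits of $G$ outside $N$ map onto orbits of $G/N$ and $\Aut(G)$ induces automorphisms of $G/N$. I would argue that $h(G/N)\ge 2$ and then apply the Laffey--MacHale classification of groups with $\omega\le 4$. Any characteristic series must contain orbits of distinct element orders, and Fitting height $3$ forces at least three distinct orders among nontrivial elements together with a further split. Counting then shows that $N=F(G)$ is the unique minimal characteristic subgroup, and that $G/N$ is a non-nilpotent group with $\omega(G/N)=3$ or $4$. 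Laffey--MacHale says $G/N$ is a Frobenius group $C_q^m\rtimes C_r$ in the $3$-orbit case, or one of a short list in the $4$-orbit case. Orbit counting rules out the $4$-orbit case: $N\setminus\{1\}$ is one orbit, and $G\setminus N$ must then contain exactly three orbits mapping bijectively onto those of $G/N$. So $G/N\cong C_q^m\rtimes C_r$ with $C_q^m$ acting as a cyclic Frobenius-complement-compatible group, and each coset outside $N$ consists of elements of a single order.

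The key step, and the main obstacle, is the following. Every element of $G$ lies in one of five orbits, so $G$ is an EPPO-group with element orders $1,p,q$ (or a power $q^a$ or $p^a$), $r$, and so on. I would use this to show that $Q$, a Sylow $q$-preimage with $QN/N=C_q^m$, acts fixed-point-freely on $N$, which gives a Frobenius action. For $r$ there are two options: $r=p$, giving $2$-power elements of order $4$ as in both examples, or $r$ coprime, which would produce a sixth orbit. Then $C_q^m$ acting fixed-point-freely on $N$ forces $m=1$, so $G/N\cong D_{2q}$ with $p=r=2$. Transitivity of $\Aut(G)$ on $N\setminus\{1\}$ requires $N_{GL(N)}(C_q)\le \Gamma L(1,2^n)$ to be transitive, which holds. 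The remaining requirement is that the elements $vt$ ($v\in N$, $t$ an involution) split into exactly two orbits: involutions and elements of order $4$. This forces the fixed-point spaces $C_N(t)$, as $t$ runs over the involutions, to partition $N\setminus\{1\}$, i.e.\ to form a spread. Counting, the $q$ conjugates of $C_N(t)$ with $|C_N(t)|=2^{n/2}$ give $q(2^{n/2}-1)=2^n-1$, so $q=2^{n/2}+1$ is a Fermat prime, and $q\mid 2^n-1$ with $C_q$ acting irreducibly and the induced automorphism orbits constrained. Checking transitivity on order-$4$ elements should leave $n=2,q=3$ (giving $S_4$) and $n=4,q=5$ (giving $C_2^4\rtimes D_{10}$). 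For $q=17$, $n=8$, I expect the orbit count of order-$4$ elements under $N_{\Gamma L}(D_{34})$ to exceed one, and verifying this exclusion for the larger Fermat primes is where I would concentrate effort, by comparing $|\Aut(G)|$ against the number of order-$4$ elements.
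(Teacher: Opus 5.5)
\textbf{The step that excludes the larger Fermat primes fails.} Once you reach $G=N\rtimes D_{2q}$ with $N\cong C_2^{n}$, $n=2m$ and $q=2^m+1$, you plan to rule out $q\geq 17$ by showing that the elements of order $4$ split into several orbits. They never split. Since $\operatorname{ord}_q(2)=2m=n$, the module $N$ is irreducible for $C_q$. So $N=\mathbb F_{2^{2m}}$, with $C_q\le\mathbb F_{2^{2m}}^\times$ acting by multiplication, and up to conjugacy in $D_{2q}$ the involution is $t\colon x\mapsto x^{2^m}$. An element $vt$ has order $4$ iff $v\notin C_N(t)=\mathbb F_{2^m}$. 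Conjugating by $w\in N$ replaces $v$ by $v+w+w^{2^m}$, so it sweeps out the whole coset $v+\mathbb F_{2^m}$. Multiplication by $a\in\mathbb F_{2^m}^\times$ centralizes $D_{2q}$, hence is an automorphism of $G$ fixing $t$. This group permutes the nonzero cosets of $\mathbb F_{2^m}$ in the one-dimensional $\mathbb F_{2^m}$-space $N/\mathbb F_{2^m}$ regularly. So the order-$4$ elements always form one orbit; for $q=17$ this is consistent with $|\Aut(G)|=2^8\cdot|\Gamma L(1,256)|=2^8\cdot 2040$ against $4080$ elements of order $4$. Likewise $N^\#$ and the involutions outside $N$ are single orbits for every such $q$.

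The extra orbits come from the elements of order $q$. After an inner adjustment, an automorphism sending $c\in C_q$ to $c^j$ restricts to an isomorphism between $N$ and its twist by $j$. This forces $j\in\langle 2\rangle\le\mathbb F_q^\times$, a subgroup of order $n$. In fact $\omega(G)=4+(q-1)/n$, which equals $5$ only for $q\in\{3,5\}$. The missing idea is therefore that the elements of order $q$ (the set $K\setminus F(G)$) must form one orbit. This means $2$ is a primitive root mod $q$, i.e.\ $n=q-1=2^{n/2}$, giving $n\in\{2,4\}$. This is exactly how the paper finishes. Since $\omega(\gamma_\infty(G))=3$, Laffey--MacHale give $(q-1)\mid n$. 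Combined with the spread count $q=(p^{pd}-1)/(p^d-1)$, this yields $pd\ge p^{(p-1)d}$, hence $p=2$ and $d\in\{1,2\}$.

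\textbf{The reductions before this are mostly asserted rather than proved.}
\begin{itemize}
\item ``Five orbits, so $G$ is EPPO'' is a non sequitur: $(C_5\times C_5)\rtimes C_6$ has five orbits and elements of order $6$. The paper needs Lemma~\ref{lem:2primes-EPPO} for this, where Lemma~\ref{lem:cyclic-by-cyclic-action} produces a sixth orbit.
\item Each of the following needs a proof: that $N^\#$ is a single orbit for a minimal characteristic $N$; that $N=F(G)$; and that $\omega(G/N)=4$ is impossible (for instance, $G/N\cong D_{18}$ has to be excluded).
\item Your claim that $r\neq p$ ``produces a sixth orbit'' is the three-prime case. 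The paper settles it only via the $AT$-property, Zhang's classification, and the impossibility of $q=p^d+1$ for odd primes.
\item Your remark that each coset outside $N$ consists of elements of a single order is false already in $S_4$: $V_4(12)$ contains $(12)$ together with the $4$-cycles $(1324)$ and $(1423)$.
\item In the converse direction you did not verify that the $60$ elements of order $4$ in $C_2^4\rtimes D_{10}$ form one orbit. The case $m=2$ of the computation above supplies it.
\end{itemize}
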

    
This case also has a natural geometric interpretation: certain fixed-point spaces form a \emph{spread} of the elementary abelian Fitting subgroup, a well-known concept established in the literature (see for instance \cite{Beu75,Hir98}). 

When $G$ has Fitting height $2$, further cases arise. First we consider groups whose orders are divisible by three distinct primes. Throughout, $\pi(G)$ denotes the set of prime divisors of
the order of $G$. 

    \begin{theorem}\label{thm:main3}
    Let $G$ be a solvable non-$p$-group with  $\omega(G)=5$. If $h(G)=2$ and $|\pi(G)|=3$, then  one of the following holds:
    
    \begin{enumerate}
    
    \item $G=(P\times Q)\rtimes C_2,$  where $P$ and $Q$ are elementary abelian Sylow subgroups.
        
    \item $G=P\rtimes C_{qr},$ where $P$ is elementary abelian and $C_{qr}$ acts fixed-point-freely on $P$.
        \end{enumerate}
    
    \end{theorem}    

Now we turn to the remaining case. 

    \begin{theorem}\label{thm:main2}
    Let $G$ be a solvable group such that $\omega(G)=5$, $|\pi(G)|=2$ and $h(G)=2$.    
    Then $G=K\rtimes H,$ where $K=\gamma_\infty(G)$, $K$ is a $p$-group and $H$ is a $q$-group. Moreover, one of the following holds:
    \begin{enumerate}
    \item  $\omega_{\Aut(G)}(K)=4$, $H\cong C_q$, $G=K\rtimes H$ is Frobenius, and  $A_H=\{\alpha|_H:\alpha\in N_{\Aut(G)}(H)\}$  acts transitively on $H^\#$.
                \item $\omega_{\Aut(G)}(K)=3$ and
        $[K,H]=K$.
        \item $\omega_{\Aut(G)}(K)=2$ and
        $\omega(H)\leq 4$, and $[K,H]=K$.
    \end{enumerate}
    \end{theorem}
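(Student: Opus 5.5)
We start from the nilpotent residual $K=\gamma_\infty(G)$. Since $h(G)=2$, $K$ is nilpotent and $G/K$ is nilpotent. $K$ is characteristic, so the orbits of $\operatorname{Aut}(G)$ on $G$ split into those inside $K$ and those outside it. Write $\omega_{\Aut(G)}(K)$ for the number of $\Aut(G)$-orbits on $K$, with $\{1\}$ counted as one orbit. Then
\[\omega(G)=\omega_{\Aut(G)}(K)+\#\{\text{orbits in } G\setminus K\}=5.\]

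\textbf{Step 1: $K$ is a $p$-group with a $q$-group complement.} Elements of one $\Aut(G)$-orbit all have the same order. Since $K\neq 1$ and $G$ is not nilpotent, there are few orbits available. Suppose $\pi(K)$ contained both primes. Then $K$ is nilpotent with both Sylow subgroups nontrivial and characteristic. Elements of order $p$, of order $q$, and of mixed order $pq$ already give at least three nontrivial orbits in $K$. Together with $\{1\}$ and at least one orbit outside $K$, this gives at least five orbits in total. Equality then forces $G/K$ to be elementary abelian with all elements outside $K$ in one orbit, which I would rule out by an order argument. So $K$ is a $p$-group.

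Next, $G/K$ is nilpotent. Because $K=\gamma_\infty(G)$ equals $[K,G]$, a Sylow subgroup of $G/K$ for the prime $p$ would split off, and this would contradict minimality of the residual (via Schur--Zassenhaus and the standard argument that $\gamma_\infty$ has the property that $G/K$ has a normal $p'$-Hall part). Since $|\pi(G)|=2$, we get $G/K\cong H$, a $q$-group. Schur--Zassenhaus then gives $G=K\rtimes H$. Finally, $[K,H]=K$ follows from $K=\gamma_\infty(G)=[K,G]$ together with coprime action.

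\textbf{Step 2: $2\le\omega_{\Aut(G)}(K)\le 4$.} Every element outside $K$ has order divisible by $q$, so the elements outside $K$ contribute at least one orbit. Hence $\omega_{\Aut(G)}(K)\le 4$. Since $K\ne 1$, also $\omega_{\Aut(G)}(K)\ge 2$.

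\textbf{Step 3: the case $\omega_{\Aut(G)}(K)=4$.} Here all of $G\setminus K$ is a single orbit, so all its elements have the same order. Each coset $Kh$ with $h\ne1$ contains an element of $q$-power order, namely a conjugate of an element of $H$. Therefore every element outside $K$ has order equal to a fixed power of $q$. It follows that every nontrivial element of $H$ acts fixed-point-freely on $K$, since a fixed point $x$ would give an element $xh$ of mixed order. So $G$ is Frobenius with complement $H$.

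All elements of $H^\#$ then share one order. Frobenius complements have cyclic or generalized quaternion Sylow subgroups, so $H$ is cyclic or generalized quaternion. In either case, all nontrivial elements having the same order forces $H\cong C_q$.

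For transitivity: take $h,h'\in H^\#$ and $\alpha\in\Aut(G)$ with $h^\alpha=h'$. Then $H^\alpha$ is another complement, so $H^\alpha=H^k$ for some $k\in K$. Composing $\alpha$ with conjugation by $k^{-1}$ gives an element of $N_{\Aut(G)}(H)$. This element sends $h$ to a $K$-conjugate of $h'$ lying in $H$. In a Frobenius group $H\cap H^k=1$ for $k\notin H$, so that conjugate equals $h'$. Hence $A_H$ is transitive on $H^\#$.

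\textbf{Steps 4 and 5: the cases $\omega_{\Aut(G)}(K)=3$ and $2$.} When $\omega_{\Aut(G)}(K)=3$, what is needed is $[K,H]=K$, which was obtained in Step 1.

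When $\omega_{\Aut(G)}(K)=2$, in addition we need $\omega(H)\le 4$. The natural map $\Aut(G)\to\Aut(G/K)\cong\Aut(H)$ sends $\Aut(G)$-orbits on $G\setminus K$ onto $\Aut(H)$-orbits on $H^\#$ (the map need not be injective on orbits, but it is surjective). This gives $\omega(H)-1\le 5-2=3$, so $\omega(H)\le 4$.

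\textbf{Main obstacle.} I expect the hardest step to be Step 1, specifically excluding the case where $K$ is not of prime-power order. To handle it I would first show that $G/K$ cannot be covered by a single orbit of mixed-order elements, by counting element orders.
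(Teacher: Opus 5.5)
\textbf{The gap is in the second half of Step~1}, where you conclude that $G/K$ is a $q$-group once $K$ is a $p$-group. The principle you invoke is false in general: that if $\gamma_\infty(G)$ is a $p$-group, then $p\nmid|G/\gamma_\infty(G)|$ ``by minimality of the residual''. Take $G=S_3\times C_3$. It is solvable with $|\pi(G)|=2$ and $h(G)=2$, yet $\gamma_\infty(G)=A_3\cong C_3$ and $G/\gamma_\infty(G)\cong C_6$. The Sylow $3$-subgroup of the quotient does split off, as a direct factor, and nothing about the residual is contradicted.

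This group is not a $5$-orbit group, which is exactly the point: $\omega(G)=5$ must be used here, and this is where the paper does real work. Suppose $p\mid|G/K|$. The nilpotent group $G/K$ then has elements of orders $p$, $q$, $pq$, so $G\setminus K$ contains at least three orbits. Hence $K^\#$ is a single orbit and $K$ is elementary abelian. A system normalizer gives $G=K\rtimes(A\times B)$ with $A\cong C_p^n$ centralizing $K$ and $C_K(B)=1$, so $A\le Z(G)$ and $Z(G)\cap K=1$. Take $a\in A^\#$ and $x\in K^\#$. The elements $a$ and $ax$ lie in the same orbit, because all elements whose image in $G/K$ has order $p$ form one orbit. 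But $a\in Z(G)$ and $ax\notin Z(G)$, contradicting that $Z(G)$ is characteristic. Without an argument of this kind you have neither $H$ a $q$-group nor the decomposition $G=K\rtimes H$, and Steps~2--5 all rest on that decomposition.

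\textbf{The first half of Step~1 is also unfinished}, and the plan you sketch would not close it. In the case at hand, $K=P\times Q$ with both factors nontrivial, $G/K$ is an elementary abelian $r$-group, and $G\setminus K$ is one orbit. A Sylow $r$-subgroup of $G$ maps onto $G/K$, so the unique orbit outside $K$ consists of $r$-elements, not mixed-order elements. Counting orders therefore gives no contradiction.

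What closes it is the minimality of $\gamma_\infty$. The Sylow $r'$-subgroup of $K$ is characteristic in $K$, hence normal in $G$, and has $r$-power index. So the quotient is nilpotent and this subgroup contains $\gamma_\infty(G)=K$, which is absurd.

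With this repair, your route to ``$K$ is a $p$-group'' is more elementary than the paper's. The paper first proves that $G$ is CPPO, via two propositions built on the Laffey--MacHale classification, and then appeals to the structure of solvable CPPO-groups of Fitting height $2$.

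Steps~2--5 agree with the paper and are fine. For $[K,H]=K$, the clean justification is $K=[K,G]=[K,H]K'$ together with $K'\le\Phi(K)$.
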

    We present examples of each item in Tables~\ref{tab:examples-main2} and \ref{tab:examples-three-primes}.     The converse does not hold in general: there are groups satisfying the structural conditions in cases (2) and (3) that are not 5-orbit groups. However, the converse does hold in case (1).

    This paper is organized as follows. In Section~2, we establish some preliminary results and relate known properties of CPPO-  and EPPO-groups to automorphism orbits. In Section~3, we prove Theorem~\ref{thm:main2}. Section~4 contains the technical results needed for the proofs of Theorems~\ref{thm:main} and \ref{thm:main3}, including the connection with spreads in finite projective spaces. In Section~5, we prove Theorems~\ref{thm:main} and \ref{thm:main3}. Finally, in Section~6, we construct EPPO- and non-CPPO-groups with prescribed numbers of automorphism orbits.
    
\section{Preliminary Results}

In this section, we observe that groups with very few automorphism orbits are necessarily CPPO. 
We also give some elementary properties for solvable 5-orbit groups. 

Notice that the finite $2$-orbit groups are the elementary abelian groups, while finite $3$-orbit groups have been completely classified  \cite{BG,LM86,  LZ} and are necessarily EPPO-groups.
As a result of these two classifications, we have the following elementary observation. 

\begin{corollary}\label{lem:omega-three-eppo}
Let $G$ be a group. If $\omega(G)\leq 3$, then $G$ is an EPPO-group.
\end{corollary}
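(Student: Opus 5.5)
We argue by cases on $\omega(G)$ and reduce to the cited classifications. The basic observation is that automorphisms preserve element orders. So the set of orders of elements of $G$ has at most $\omega(G)$ members, and each automorphism orbit consists of elements of a single order.

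If $\omega(G)=1$, then $G=\{1\}$, which is trivially EPPO. If $\omega(G)=2$, then $G$ is elementary abelian, say of exponent $p$. Every nontrivial element then has order $p$, so $G$ is EPPO.

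Now suppose $\omega(G)=3$. The orbits are $\{1\}$ and two further orbits, so at most two nontrivial element orders $m_1,m_2$ occur.

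First assume $G$ is a $p$-group. Then every element has $p$-power order and there is nothing to prove. This case also covers the 3-orbit $2$-groups classified in \cite{BG} and the odd-$p$ case of \cite{LZ}.

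Next assume $G$ is not a $p$-group. We invoke the classification of Laffey and MacHale \cite{LM86} for solvable non-$p$-groups with $\omega(G)=3$. In that classification, $G$ is a semidirect product $P\rtimes C_q$, where $P$ is an elementary abelian $p$-group and $C_q$ acts fixed-point-freely, i.e. a Frobenius group. Its element orders are exactly $1$, $p$ and $q$, so $G$ is EPPO.

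It remains to exclude non-solvable groups with $\omega(G)=3$. We give a direct argument. If $G$ is non-solvable, then by Burnside's $p^aq^b$ theorem $|G|$ has at least three prime divisors. By Cauchy's theorem $G$ contains elements of each prime order, which gives at least three distinct nontrivial element orders and hence at least three nontrivial orbits. This contradicts $\omega(G)=3$.

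A direct argument is also available in the non-$p$-group case. Cauchy's theorem again gives elements of prime orders $p$ and $q$. Since there are only two nontrivial orbits, the nontrivial elements split exactly into the elements of order $p$ and those of order $q$. Hence no element has composite order, and $G$ is EPPO.

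The only delicate point is making sure the non-$p$-group case is covered in full, including non-solvable groups. The Cauchy/Burnside counting argument handles this without needing the full content of the classifications.
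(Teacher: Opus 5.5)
Your proof is correct, but it takes a more elementary route than the paper. The paper gives no argument beyond citing the known classifications. Those say that $2$-orbit groups are elementary abelian, and that the $3$-orbit groups (Laffey--MacHale for non-$p$-groups, Bors--Glasby and Li--Zhu for $p$-groups) all turn out to be EPPO.

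You instead note that the $p$-group case is trivial, so the $p$-group classifications are not needed at all. Your final counting argument then settles every non-$p$-group with $\omega(G)=3$, solvable or not, using only Cauchy's theorem and the fact that automorphisms preserve element orders. This makes both the appeal to Laffey--MacHale and the Burnside step redundant, though both are correct as stated. What the paper's citation buys is the full structure of $3$-orbit groups (e.g.\ $C_p^n\rtimes C_q$ Frobenius), which is used later in the paper but is irrelevant to this corollary.

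In fact your idea compresses the whole proof to one line, which also covers $\omega(G)\le 2$ without invoking the elementary-abelian result. If some $g\in G$ has order $n$ divisible by two distinct primes $p,q$, then $1$, $g^{n/p}$, $g^{n/q}$, $g$ have the four distinct orders $1,p,q,n$, so $\omega(G)\ge 4$.
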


Groups with four automorphism orbits need not be EPPO. For instance, if $p$ and $q$ are distinct primes, then $G=C_{pq}$ has exactly four automorphism orbits, represented by elements of orders $1, p, q, pq$. Nevertheless, the CPPO property still holds for every finite $4$-orbit group. Indeed, this follows from the classification of solvable $4$-orbit groups \cite{LM86} and the fact that the only non-solvable group with $\omega(G)=4$ is $A_5$, which is an EPPO-group. It follows that:

\begin{corollary}\label{prop:omega-four-cppo}
Every group $G$ satisfying
$\omega(G)\leq4$
is a CPPO-group.
\end{corollary}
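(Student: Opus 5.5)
The plan is to split according to solvability and then read the conclusion off the known classifications. Corollary~\ref{lem:omega-three-eppo} gives the case $\omega(G)\le 3$ immediately, since an EPPO-group is in particular CPPO: every commutator is an element, hence of prime-power order. It remains to treat $\omega(G)=4$.

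\emph{Non-solvable case.} If $G$ is non-solvable with $\omega(G)=4$, then $G\cong A_5$ by the classification cited above. Its element orders are $1,2,3,5$, so $A_5$ is EPPO and therefore CPPO.

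\emph{Solvable case.} For solvable $G$ with $\omega(G)=4$, I would use the structure theorem of Laffey and MacHale \cite{LM86}. If $G$ is a $p$-group, every element has $p$-power order and there is nothing to prove. Otherwise $G$ is a solvable non-$p$-group, and \cite{LM86} describes the possible shapes. Roughly:
\begin{itemize}
\item $G$ is cyclic of order $pq$, or more generally abelian of the form $C_p^a\times C_q^b$; or
\item $G=P\rtimes Q$ with $P$ a $p$-group, where the orbits are $\{1\}$, two orbits inside $P$ and one outside, or one orbit in $P$ and two outside.
\end{itemize}

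In the abelian case $G'=1$, so the claim is trivial. In the non-abelian case the key observation is that $G'\le F(G)=P$ is a $p$-group. This holds because $G/P$ is abelian: it is either of prime order or cyclic of prime-power order, as forced by the orbit count. Hence every commutator lies in $P$ and has $p$-power order.

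The step I expect to be the main obstacle is checking, against the precise list in \cite{LM86}, that $G/F(G)$ is abelian in every non-nilpotent case. An alternative argument is cleaner. If $G$ is non-nilpotent, the elements of $G'$ form a union of $\Aut(G)$-orbits. Moreover, $G'$ contains no element of order $pq$, because an element whose order involves two primes gives at least three nontrivial orbits inside $G'$. That already uses all four orbits, which would force $G=G'$, contradicting solvability. So $G'$ is EPPO, and hence every commutator has prime-power order.
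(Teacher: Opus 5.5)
\textbf{The first route fails.} The claim you call the key observation, that $G/F(G)$ is abelian so $G'\le F(G)=P$, is false. The Laffey--MacHale list contains Frobenius groups $P\rtimes Q_8$ with $P$ elementary abelian; this is the quaternion alternative the paper mentions in Section~3.

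A concrete instance is $G=C_3^2\rtimes Q_8\le \mathrm{AGL}(2,3)$. Its orbits are $\{1\}$, $P^\#$, the $9$ involutions, and the $54$ elements of order $4$. The last set is a single orbit because conjugation by $\mathrm{GL}(2,3)$ induces all of $\Aut(Q_8)\cong S_4$ on $Q_8$. So $\omega(G)=4$, yet $G/F(G)\cong Q_8$ is non-abelian. Moreover $G'=P\langle z\rangle$, with $z$ the central involution of $Q_8$, is not a $p$-group. CPPO survives here only because $z$ acts as $-1$ on $P$, which makes $G'$ generalized dihedral. The orbit count does not exclude a non-abelian complement with three orbits, so drop this route.

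\textbf{The alternative argument is correct and complete,} and it takes a genuinely different route from the paper. The paper reads the CPPO property off the explicit Laffey--MacHale classification of solvable $4$-orbit groups. It combines this with Corollary~\ref{lem:omega-three-eppo} and the fact that $A_5$ is the only non-solvable $4$-orbit group.

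Your argument runs as follows. Suppose $c\in G'$ has order divisible by two primes $p\neq q$. Then $1$, a power of $c$ of order $p$, a power of order $q$, and $c$ itself lie in four distinct orbits inside the characteristic subgroup $G'$. This forces $G=G'$, which is impossible for solvable $G\neq 1$.

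What this buys you:
\begin{itemize}
\item It uses no classification at all in the solvable case.
\item It needs neither the non-nilpotency hypothesis nor the separate treatment of $p$-groups and abelian groups.
\item It proves the stronger statement that $G'$ is EPPO.
\end{itemize}
State it for elements of mixed order rather than of order $pq$; your reasoning already covers that.

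By Burnside's $p^aq^b$-theorem, a non-solvable group has $\omega(G)\ge 1+|\pi(G)|\ge 4$. The only classification input left is then that $A_5$ is the unique non-solvable $4$-orbit group, so even the appeal to Corollary~\ref{lem:omega-three-eppo} becomes unnecessary. This is the same counting trick the paper uses in Section~3 for $\omega(G)=5$, where it no longer suffices on its own.
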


    Now we make some remarks about non-solvable groups. First consider the simple case. As shown in \cite{SS24}, the non-abelian simple CPPO-groups are $\operatorname{PSL}(2,q), \operatorname{PSL}(3,4), \operatorname{Sz}(8), \operatorname{Sz}(32)$, for $q\in\{4,7,8,9,17\}$.  
    
    \begin{rem}
        If $G$ is a non-abelian simple CPPO-group, then $\omega(G)\in\{4,5,6,7,10,11\}$.       
        Indeed, $\omega(A_5 )= 4$, $\omega(\operatorname{PSL}(2,7)) = 5$, $\omega(\operatorname{PSL}(2,8)) = 5$, $\omega(A_6) = 5$, $\omega(\operatorname{PSL}(3,4)) = 6$, $\omega(\operatorname{SZ}(8)) = 7$, $\omega(\operatorname{PSL}(2,17)) = 10$, $\omega(\operatorname{Sz}(32)) = 11$.
    \end{rem}

    Moreover, for simple groups the CPPO and EPPO conditions coincide. The classification of finite non-solvable groups with at most six automorphism orbits was obtained by Dantas, Garonzi, and Bastos \cite{DGB17}, answering a question posed by M. Stroppel in \cite{Str01}. 
        Thus,
    
    \begin{rem}
        If $G$ is a finite non-solvable group with $\omega(G)\leq 6$, then $G$ is EPPO.
        Indeed, $\omega(\operatorname{ASL}(2,4))= 6$ and $\operatorname{ASL}(2,4)$ is EPPO. The bound is sharp since $\omega(\operatorname{PSL}(2,11))=7$ and $\operatorname{PSL}(2,11)$ is not CPPO.
    \end{rem}

We end this section with a few general properties of finite solvable non-$p$-groups with $5$ automorphism orbits. The first property follows directly from the fact that finite nilpotent groups are direct products of their Sylow subgroups.

\begin{prop}
    Let $G$ be a solvable non-$p$-group. If $\omega(G)=5$, then $G$ is not nilpotent.
\end{prop}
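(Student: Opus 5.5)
We argue by contradiction: assume $G$ is nilpotent, and write $G=P_1\times\cdots\times P_k$ as the direct product of its Sylow subgroups. Since $G$ is not a $p$-group, $k\geq 2$. Each $P_i$ is characteristic in $G$. Hence $\Aut(G)\cong \Aut(P_1)\times\cdots\times\Aut(P_k)$, acting coordinatewise. So the $\Aut(G)$-orbits on $G$ are exactly the products of orbits of the factors, and
\[
\omega(G)=\prod_{i=1}^k \omega(P_i).
\]
The only thing to check is this product decomposition. It holds because any automorphism permutes elements while preserving orders, so it maps each Sylow subgroup to itself. Conversely, independent automorphisms of the factors assemble into an automorphism of $G$.

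Each $P_i$ is nontrivial, so $\omega(P_i)\geq 2$. Therefore $\omega(G)$ is a product of $k\geq 2$ integers, each at least $2$. Since $5$ is prime, this is impossible, and so $G$ is not nilpotent.

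There is essentially no obstacle here. The only point needing care is the multiplicativity of $\omega$ over coprime direct factors, which rests on the Sylow subgroups being characteristic. Solvability is not used at all.
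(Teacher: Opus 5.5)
Your proof is correct and takes the same approach as the paper, which proves this only by citing that a finite nilpotent group is the direct product of its Sylow subgroups. You make explicit the steps the paper leaves implicit: the Sylow subgroups are characteristic, so $\omega(G)=\prod_i \omega(P_i)$ with at least two factors each $\geq 2$, and $5$ is prime.
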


We now establish some restrictions on the Fitting height and derived
length. As usual, $dl(G)$ denotes the derived length of $G$.

\begin{prop}\label{prop:properties}
    Let $G$ be a solvable non-$p$-group. If $\omega(G)=5$, then
    \begin{enumerate}
        \item $|\pi(G)|\in\{2,3\} $;
        \item $h(G)\in\{2,3\}$;
        \item $dl(G) \in\{2,3\}$.    
    \end{enumerate}
\end{prop}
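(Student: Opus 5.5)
The basic tool is the observation that automorphisms preserve element orders. So every order occurring in $G$ forms a union of orbits, and distinct element orders lie in distinct orbits. We also use the characteristic series attached to $G$: the terms of the Fitting series and of the derived series are characteristic subgroups. Hence each nontrivial layer $X\setminus Y$ between consecutive terms $Y<X$ is a nonempty union of orbits. Throughout, write $1$ for the trivial orbit.

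\textbf{(1)} Let $\pi(G)=\{p_1,\dots,p_k\}$. Since $G$ is solvable, it has a Hall subgroup and in particular elements of each prime order $p_i$, so at least $k$ nontrivial orbits occur, namely the orders $p_1,\dots,p_k$. If $k\geq 4$, this already gives at least $5$ orbits including $\{1\}$, with equality forcing every nontrivial element to have prime order. Then $G$ is an EPPO-group in which all elements have prime order. By Higman's classification \cite{Hig57} of solvable EPPO-groups, such a group has $|\pi(G)|\leq 2$, a contradiction. Hence $k\leq 4$ cannot reach $k=4$. Combined with $G$ being a non-$p$-group, this gives $k\in\{2,3\}$. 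Alternatively, one can argue directly: a solvable group all of whose elements have prime order is a $p$-group or of the form $P\rtimes C_q$ (Frobenius) or a similar two-prime configuration. I would cite Higman's result for the EPPO bound $|\pi(G)|\leq 2$ for solvable groups, and check the precise statement in \cite{Hig57}.

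\textbf{(2)} By the preceding proposition $G$ is not nilpotent, so $h(G)\geq 2$. Suppose $h(G)\geq 4$, with Fitting series $1<F_1<F_2<F_3<F_4\leq G$. Each of the layers $F_1\setminus 1$, $F_2\setminus F_1$, $F_3\setminus F_2$ and $G\setminus F_3$ is a nonempty union of orbits. Since $\omega(G)=5$, each layer is exactly one orbit. In particular $F_1$ is a $2$-orbit group, hence elementary abelian, so it is a $p$-group. Next, $F_2\setminus F_1$ is a single orbit, so all elements outside $F_1$ in $F_2$ have the same order. Since $F_2/F_1$ is nilpotent and nontrivial, this forces $F_2/F_1$ to be a $q$-group with $q\neq p$; otherwise $F_2$ would be a $p$-group, contradicting $F_1=F(G)$. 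Moreover, every element of $F_2\setminus F_1$ has order $q$, so $F_2=F_1\rtimes Q$ is Frobenius-like with elementary abelian $Q$. A Frobenius complement with all nontrivial elements of order $q$ must be cyclic of order $q$.

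Iterating, $F_3/F_2$ is an $r$-group and $G/F_3$ is an $s$-group with all the relevant elements of a single order. Thus $G$ is an EPPO-group with single orders in each layer. By Higman's results, solvable EPPO-groups have Fitting height at most $3$. This is a contradiction, so $h(G)\in\{2,3\}$. The main obstacle is making this step rigorous, namely controlling element orders in the upper layers. I would invoke \cite{Hig57} for the Fitting height bound on solvable EPPO-groups rather than redo the analysis.

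\textbf{(3)} Since $G$ is non-abelian, $dl(G)\geq 2$. The derived series $G>G'>G''>G'''>\cdots$ consists of characteristic subgroups. If $dl(G)\geq 4$, then $G^{(3)}\neq 1$ and the four layers $G^{(3)}\setminus 1$, $G''\setminus G^{(3)}$, $G'\setminus G''$ and $G\setminus G'$ each carry exactly one orbit. Then $G'$ has all its nontrivial elements in two orbits of $\Aut(G)$, so $G'$ has a characteristic elementary abelian subgroup $G^{(3)}$ and uniform order on the rest. As in (2), every element has prime-power order and $G$ is EPPO. Moreover, the single orbit $G\setminus G'$ forces $G/G'$ to be cyclic of prime order. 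I would then use Higman's structure theorem, which says solvable EPPO-groups have derived length bounded in these configurations. Alternatively, $h(G)\leq 3$ from (2) together with the single-orbit layers forces each Fitting factor to be elementary abelian, so the Fitting factors are abelian and $dl(G)\leq h(G)\leq 3$.

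Concretely, I would prove (3) by showing that each nilpotent Fitting factor is abelian, using the orbit count. From $\omega(G)=5$ and $h(G)\in\{2,3\}$, the number of orbits available inside each Fitting layer is at most $5-h(G)$. A nonabelian nilpotent layer would contribute two orbits in its own layer (the center versus the rest), and I would check that the total count is then exceeded. The subcase $h=2$ with a nonabelian layer needs care, since $2+2=4$ orbits leave room. There I would bound the derived length by $3$ directly: $F_1$ of class at most $2$ and $G/F_1$ abelian give $dl(G)\leq 3$.
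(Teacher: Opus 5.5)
Parts (1) and (2) are sound.

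\textbf{Part (1).} You take a different route from the paper. If $|\pi(G)|=4$, the five orbits must be $\{1\}$ and the four sets of elements of prime order, so $G$ would be EPPO, contradicting Higman's bound $|\pi(G)|\le 2$ for solvable EPPO-groups. The paper instead obtains a sixth orbit from an element of mixed order via Lucido's prime-graph result, which is valid once $|\pi(G)|\ge 3$, and that is all that is needed there. Both arguments are fine.

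\textbf{Part (2).} This is the paper's argument. The step you flag as the main obstacle is a one-liner. Suppose a layer $X\setminus Y$ between characteristic subgroups is a single orbit and some $x\in X\setminus Y$ has order divisible by two primes. Then the primary components of $x$ are powers of $x$ of strictly smaller order, hence lie in $Y$, which forces $x\in Y$. So single-orbit layers consist of elements of prime-power order, $G$ is EPPO, and Higman's bound $h(G)\le 3$ applies. The Frobenius digression is not needed.

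\textbf{Part (3)} has a genuine gap: none of your three routes goes through.
\begin{itemize}
\item Higman's theorem gives no bound on derived length, since every $p$-group is EPPO.
\item The claim that single-orbit layers make every Fitting factor elementary abelian is circular. If $dl(G)=4$, the four nontrivial orbits are exactly the four derived layers, so every characteristic subgroup, in particular every $F_i(G)$, is a term of the derived series. Since $h(G)\le 3<4$, some Fitting factor contains two derived layers and is non-abelian by construction.
\item The orbit counting does not close either. A nilpotent layer carrying two orbits can have class $2$. So in a split of type $(2,1,1)$ for $h=3$, or $(2,2)$ for $h=2$, counting only yields $dl(G)\le 4$, and these are exactly the cases that must be excluded.
\item Your assertion that $G/F(G)$ is abelian when $h=2$ is false. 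For example, $(C_9\times C_9)\rtimes Q_8$ in Table~\ref{tab:examples-main2} has $\omega=5$, $h=2$ and $G/F(G)\cong Q_8$.
\end{itemize}

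The paper instead passes to $G/G'''$, which has at most four orbits and derived length $3$, and quotes the classification of groups with $\omega\le 4$. Be aware that the fact it relies on (derived length at most $2$ when $\omega\le 4$) fails as stated. Take the Frobenius group $U\rtimes C_3$ with $U$ a Sylow $2$-subgroup of $\operatorname{SU}(3,4)$. Its orbits are $\{1\}$, $Z(U)^{\#}$, $U\setminus Z(U)$, and the elements of order $3$, so it has exactly four orbits, yet its derived length is $3$.

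What is really required is to rule out the configuration in which all four derived layers are single orbits. In that configuration $G$ is EPPO and $F(G)\in\{G',G'',G'''\}$.
\begin{itemize}
\item If $F(G)=G'$: the terms $\gamma_i(G')$ are characteristic in $G$, hence derived terms. So $G'$ has class at most $3$, and $G'''=[\gamma_2(G'),\gamma_2(G')]\le\gamma_4(G')=1$, a contradiction.
\item If $F(G)\in\{G'',G'''\}$: the layer just above $F(G)$ consists of $q$-elements acting fixed-point-freely on $F(G)$. One finishes either with the structure of Frobenius complements (cyclic or generalized quaternion), or with Lemma~\ref{lem:cyclic-by-cyclic-action}, which produces elements of orders $p$ and $p^2$ in the same layer.
\end{itemize}
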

\begin{proof}
    Automorphisms preserve element orders, so  $\omega(G)\geq 1+|\pi(G)|.$ Since $\omega(G)=5$, it follows that $|\pi(G)|\leq 4.$
    Note that $G$ is solvable with $|\pi(G)|\geq 2$, so by \cite{Luc99}, $G$ has an element whose order is divisible by at least two distinct primes, thus there is an extra orbit of mixed order. Therefore $\omega(G)\geq 1+|\pi(G)|+1.$ Since $\omega(G)=5$, we obtain $|\pi(G)|\leq 3.$ 

    Assume by way of contradiction that $h(G)=4$. Then the upper Fitting series of $G$ can be written as
    $$
    1=F_0(G)<F_1(G)<F_2(G)<F_3(G)<F_4(G)=G
    .$$
    It is known that $\omega(G)\geq h(G)+1=5.$ Since $\omega(G)=5$, equality thus holds. Therefore each of the four subsets $F_i(G)\setminus F_{i-1}(G) $  is a single automorphism orbit, for $ 1\leq i\leq 4,$. Moreover, every element of $F_i(G)\setminus F_{i-1}(G)$ has prime-power order. So $G$ is an EPPO-group.
        By Higman's theorem \cite{Sou24,Hig57},
        $h(G)\leq 3,$ contradicting our assumption that $h(G)=4$. Therefore $h(G)\leq 3$.

    Finally, suppose that $dl(G)=4.$ Then $G'''\neq 1$ and $G^{(4)}=1.$ Note that $G'''$ is characteristic, so we consider the quotient $\overline G=G/G'''.$ At least one nontrivial automorphism orbit contained in $G'''$ is the identity in $\overline G$. Hence $\omega(\overline G)\leq 4.$ 
    However, $dl(\overline G)=3.$ This contradicts the classification of finite solvable groups with at most four automorphism orbits, since such groups have derived length at most $2$, and we conclude that $dl(G)\leq 3.$
\end{proof}

\section{Proof of Theorem~\ref{thm:main2}}
        
     Let $G$ be a solvable group such that $\omega(G)=5$ and $\pi(G)=\{p,q\}$, with $p,q$ distinct primes. We begin by showing that $G$ is CPPO. First consider groups with derived length 3.
     
    \begin{prop}
        Under the hypothesis stated above, if $dl(G)=3$, then $G$ is a CPPO-group.
    \end{prop}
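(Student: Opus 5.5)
The plan is to exploit the characteristic derived series $1=G'''<G''<G'<G$, which has length exactly $3$ because $dl(G)=3$. Each of the three nontrivial layers $G''\setminus\{1\}$, $G'\setminus G''$ and $G\setminus G'$ is a union of automorphism orbits. Together with $\{1\}$ this gives at least four orbits. Since $\omega(G)=5$, exactly one of these layers splits into two orbits and the others are single orbits. Every commutator lies in $G'$, so it suffices to show that every element of $G'$ has prime-power order. In fact I aim to show that $G'$ is a $p$-group or a $q$-group.

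First I treat $G''$. I pass to $\overline G=G/G''$, which is characteristic. Then $\omega(\overline G)\le 5-1=4$, since $G''\setminus\{1\}$ collapses to the identity. Moreover $\overline G$ has derived length $2$. By Corollary~\ref{prop:omega-four-cppo}, $\overline G$ is CPPO, and indeed the Laffey--MacHale classification of solvable groups with at most four orbits gives more: $\overline G'=G'/G''$ is elementary abelian. I would also like $G''$ to be elementary abelian. If $G''\setminus\{1\}$ is a single orbit, this holds because it is a characteristic subgroup all of whose nontrivial elements are conjugate under $\Aut(G)$. Otherwise $G''\setminus\{1\}$ is two orbits, in which case $G'\setminus G''$ and $G\setminus G'$ are single orbits. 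Then $G''$ is a characteristic subgroup with $\omega_{\Aut(G)}(G'')=3$, hence EPPO by the argument behind Corollary~\ref{lem:omega-three-eppo}.

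Next I show that $G'$ is a group of prime-power order. Write $G'/G''$ as an elementary abelian $r$-group. If $G'\setminus G''$ is a single orbit, then all its elements have the same order. Suppose $G''$ is an $s$-group with $s\neq r$. Then Schur--Zassenhaus gives $G'=G''\rtimes R$, and $R$ acts on $G''$. If the action is nontrivial, some element of $G'\setminus G''$ has order exactly $r$ (an element of $R$), while elements of the form $xy$ with $x\in G''$ and $y\in C_R(G'')$ would have mixed order. Uniformity of orders in one orbit forces $R$ to act fixed-point-freely, so $G'$ is Frobenius with kernel $G''$. In the alternative case $G'$ would be nilpotent of mixed order, and elements of order $rs$ together with elements of order $r$ inside one orbit give a contradiction. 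Thus the remaining bad scenario is that $G'$ is a Frobenius group with kernel $G''$ and complement of order $r$, and I must rule it out.

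For that step I would count orbits in $G\setminus G'$. The group $G/G'$ is abelian and involves a prime, and $G$ is a $\{p,q\}$-group with $\{r,s\}=\{p,q\}$. The layer $G\setminus G'$ must be one orbit, or two if $G''$ is a single orbit. Elements of $G\setminus G'$ of $r$-power order and of $s$-power order would be forced, together with elements of mixed order coming from \cite{Luc99} or from a direct construction. Producing at least three distinct order types, and hence three orbits, in $G\setminus G'$ contradicts the count. The split case, where $G'\setminus G''$ consists of two orbits, is handled similarly: there $G''$ is a single orbit and $G\setminus G'$ is a single orbit, so all elements outside $G'$ have equal order. Taking a Sylow decomposition shows this is impossible unless $G'$ has prime-power order.

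I expect this last order-counting step, excluding a Frobenius-type $G'$ of mixed order, to be the main obstacle. Once $G'$ has prime-power order, every commutator has prime-power order, so $G$ is CPPO.
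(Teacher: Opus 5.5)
\textbf{There is a genuine gap: your intermediate target is false, so the final step cannot be carried out.} The group $S_4$ satisfies every hypothesis:
\begin{itemize}
\item it is solvable with $\pi(S_4)=\{2,3\}$;
\item $\omega(S_4)=5$, since its automorphisms are inner and its classes are $1$, transpositions, double transpositions, $3$-cycles and $4$-cycles;
\item $S_4'=A_4$ and $S_4''=V_4$, so $dl(S_4)=3$.
\end{itemize}
But $G'=A_4=V_4\rtimes C_3$ does not have prime-power order. It is exactly your ``remaining bad scenario'': Frobenius with kernel $G''$ and complement of prime order $r=3$. The same happens in $C_2^4\rtimes D_{10}$, where $G'=C_2^4\rtimes C_5$. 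It also happens in the Frobenius group $(C_9\times C_9)\rtimes Q_8$ of Table~\ref{tab:examples-main2}. There $G'=(C_9\times C_9)\rtimes C_2$, and $G\setminus G'$ is a single orbit of elements of order $4$.

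Your counting in $G\setminus G'$ fails because nothing forces both primes to occur there. If $G/G'$ is an $s$-group, every element outside $G'$ has order divisible by $s$, so no $r$-elements lie outside $G'$. In $S_4$, $G\setminus G'$ is precisely the two orbits of transpositions and $4$-cycles, which is entirely consistent with $\omega=5$.

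\textbf{How to repair it.} The Frobenius configuration is harmless for the statement. A Frobenius group whose kernel is an $s$-group and whose complement is an $r$-group is the union of its kernel and the conjugates of its complement. Hence it is EPPO, and all commutators of $G$ then have prime-power order. So weaken your target from ``$G'$ has prime-power order'' to ``$G'$ is EPPO''. With that change, your analysis already covers the cases in which $G'\setminus G''$ is a single orbit.

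In those cases you only need $G'/G''$ to be an $r$-group. This follows because the abelian group $\overline G'$ cannot contain elements of order $pq$ when $\omega(\overline G)\le 4$: that would give four orbits inside $\overline G'$ plus one outside. You do not need \cite{LM86} to make $G'/G''$ elementary abelian. That is fortunate, since $G/G''$ may be a $p$-group, as with $Q_8$ in the example above, and \cite{LM86} treats non-$p$-groups.

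What remains is the case where $G'\setminus G''$ splits into two orbits. There your sentence about ``a Sylow decomposition'' is not an argument, and this is exactly where elements of order $pq$ in $G'$ must be excluded. The paper does this directly, with no derived-series bookkeeping:
\begin{itemize}
\item If $G'$ contains an element of order $pq$, the orders $1,p,q,pq$ already give four $\Aut(G)$-orbits inside $G'$.
\item Since $G\setminus G'\neq\emptyset$, we get $\omega(G')\le 4$, hence $\omega(G')=4$.
\item \cite[Theorem~4]{LM86} then forces $G'=P\times Q$ to be abelian, contradicting $G''\neq 1$.
\end{itemize}
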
 

    \begin{proof}
    Suppose that $G$ is not a CPPO-group. Then there exist $x,y\in G$ such that $[x,y]$ has order $pq$. Set $A=G'.$  The characteristic subgroup $A$ contains at least four  orbits, represented by elements of orders $1,\ p,\ q,\ pq.$ Since $A<G$, there is also an automorphism orbit contained in $G\setminus A$. Since $\omega(G)=5$, it follows that $A$ contains exactly four orbits.

    Note that $A\charac G$. Hence $\omega(A)\leq4.$ On the other hand, $A$ contains elements of the four distinct orders $1,\ p,\ q,\ pq,$ so that $\omega(A)\geq4$. Therefore $\omega(A)=4.$

    By \cite[Theorem~4]{LM86}, a finite solvable $4$-orbit group of non-prime-power order has a normal Sylow subgroup and satisfies one of the three alternatives. Since $A$ contains an element of order $pq$, we deduce that the fixed-point-free alternative cannot occur. The alternative involving a cyclic Sylow subgroup of order $q^2$ or a quaternion Sylow $2$-subgroup is also impossible, since it would introduce an additional element order. Hence the only possibility is $A=P\times Q,$ where $P$ and $Q$ are elementary abelian Sylow subgroups.

    Thus $A$ is abelian. However, since $dl(G)=3$, this implies that $A'=G''\neq1,$ a contradiction. 
    \end{proof}
    
    If $A$ is a group of automorphisms of a group $G$, the subgroup generated by all elements of the form $g^{-1}g^{\alpha}$ with $g \in G$ and $\alpha\in A$ is denoted by $[G, A]$. It is well known that the subgroup $[G, A]$ is an $A$-invariant normal subgroup of $G$. We write $C_G(A)$ for the centralizer of $A$ in $G$ 
        and $\Phi(G)$ for the Frattini subgroup of $G$.
    
    \begin{prop}
        Under the assumptions established at the beginning of this section, if $dl(G)=2$, then $G$ is a CPPO-group.
    \end{prop}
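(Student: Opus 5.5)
We argue by contradiction. Suppose $dl(G)=2$ and $G$ is not CPPO, so some commutator $c=[x,y]$ has order $pq$. Put $A=G'$, which is abelian and characteristic. Then $A$ contains elements of orders $1,p,q,pq$, giving at least four $\Aut(G)$-orbits inside $A$. Since $G$ is not abelian, $G\setminus A\neq\emptyset$ contributes at least one more orbit. As $\omega(G)=5$, $A$ is the union of exactly four orbits and $G\setminus A$ is a single orbit. In particular all elements of $G\setminus A$ have the same order. Moreover, $A=P_0\times Q_0$ with $P_0,Q_0$ its Sylow subgroups. Each of these is elementary abelian, since otherwise elements of order $p^2$ (say) would give a fifth orbit inside $A$.

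Next we exploit the single orbit $G\setminus A$. Write $|G:A|$. The quotient $G/A$ is abelian and nontrivial. Every coset $gA\neq A$ consists of elements of one common order $m$, so every nontrivial element of $G/A$ has order dividing $m$.

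\emph{Case 1: $m$ is a prime power, say a power of $p$.} Then $G\setminus A$ consists of $p$-elements, so $G/A$ is a $p$-group. A Sylow $q$-subgroup of $G$ is then $Q_0\le A$, which is normal in $G$. Take $g\in G\setminus A$. Then $gz\in G\setminus A$ for every $z\in Q_0$, so $gz$ is a $p$-element. Hence $g$ acts on $Q_0$ so that every element of the coset $gQ_0$ is a $p$-element. By the standard argument (if $g$ centralized some $1\neq z\in Q_0$, then $gz$ would have order divisible by $q$), each $p$-element $g\notin A$ acts fixed-point-freely on $Q_0$.

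Now $Q_0\le G'$, and we want to show that $c$ cannot exist, using the structure $G=Q_0\rtimes S$ with $S$ a Sylow $p$-subgroup. The key point: $S\cap A=P_0$ must centralize $Q_0$, because $c=c_pc_q$ with $c_p\in P_0$ centralizing $c_q\in Q_0$. All elements of $P_0\setminus\{1\}$ are conjugate under $\Aut(G)$ (they form one orbit), so all of them centralize some nontrivial element of $Q_0$. Using that $P_0$ is abelian, acts on $Q_0$, and is normal, this forces $C_{Q_0}(P_0)\neq1$. Since $C_{Q_0}(P_0)$ is characteristic and elements of order $q$ form one orbit, we get $[P_0,Q_0]=1$. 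Then $P_0\times Q_0=A$ is centralized by $P_0$, and $S/P_0$ acts fixed-point-freely on $Q_0$. Since $S/P_0\cong SA/A=G/A$ is abelian and Frobenius-complement-like, it is cyclic of order $p$ (or $p=2$ cases). Also $P_0\le Z(S)$ modulo details. Finally, $G'=[G,G]$ must equal $[Q_0,S]\cdot S'$, and $S'\le P_0$. One then checks that $P_0=S'$ is incompatible with $S/P_0$ cyclic unless $P_0=1$, contradicting $|c|=pq$.

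\emph{Case 2: $m$ is divisible by both $p$ and $q$.} Then every element outside $A$ has mixed order, while $A$ already contains all elements of prime power order. So $G$ has no Hall-type complement structure. Pick a Sylow $p$-subgroup $S$: any element of $S\setminus A$ would be a $p$-element outside $A$, which is impossible. Hence $S\le A$, and likewise the Sylow $q$-subgroups lie in $A$. Therefore $G=A$, contradicting $dl(G)=2$.

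The main obstacle is Case 1: showing that $P_0$ centralizes $Q_0$ and then deriving the contradiction from $G'=A$ containing $P_0$. I would first try to prove $C_{Q_0}(P_0)\ne1$ via the orbit-transitivity argument above, and then use $P_0\le G'\cap Z(S)$ together with the cyclicity of $G/A$ (a fixed-point-free abelian group acting on $Q_0$). The cyclicity gives $S'\le[S,S]$ small, contradicting $P_0\le G'$. If this fails, I would fall back on applying \cite[Theorem~4]{LM86} to the quotient $G/Q_0$ or $G/P_0$, each of which has at most four orbits.
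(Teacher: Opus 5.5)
Your argument is correct and follows the paper's proof. You reduce to $G/A$ being a $p$-group, write $G=Q_0\rtimes S$, obtain $S'=P_0$ from $G'=[Q_0,S]S'$, note that the abelian group $S/P_0$ acts fixed-point-freely on $Q_0$ and is therefore cyclic, and conclude from the Burnside basis theorem that $P_0=S'=1$. Two steps differ from the paper: you derive the fixed-point-free action directly from the single orbit $G\setminus A$, where the paper appeals to Laffey--MacHale; and your Case~2 replaces the paper's observation that $G/G'$ is elementary abelian.

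The orbit detour establishing $[P_0,Q_0]=1$ is superfluous. Its inference to $C_{Q_0}(P_0)\neq 1$ is also not valid as a general principle. Nothing is lost, though, since $A=G'$ is abelian and the commutation is immediate.
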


    \begin{proof}
    Suppose that $G$ is not a CPPO-group. Then $G'$ contains a commutator $[x,y]$ whose order is divisible by both $p$ and $q$. Since $\omega(G)=5$, this implies that necessarily the order of $[x,y]$ is $pq.$ 
    
    Given that $dl(G)=2$, the subgroup $G'$ is abelian. The elements $1, c^q, c^p, c$ have orders $1,p,q,pq$ respectively, so the characteristic subgroup $G'$ contains at least four $\operatorname{Aut}(G)$-orbits. Since $G'<G$ and $\omega(G)=5$, it follows that $G'$ contains exactly four orbits and $G\setminus G'$ is a single orbit and thus $G/G'$ is an elementary abelian $r$-group for some $r\in\{p,q\}$. Without loss of generality, assume that $G/G'$ is a $p$-group.
    
    Set $G'=P\times Q,$ where $P$ and $Q$ are the Sylow $p$- and $q$-subgroups of $G'$. Given that $G/G'$ is a $p$-group, $Q$ is a normal $q$-subgroup of $G$. By the Schur--Zassenhaus theorem, $G=Q\rtimes H, $ where $H$ is a Sylow $p$-subgroup of $G$. Moreover, $H\cap G'=P.$ Since $G'=Q'[Q,H]H'=[Q,H]H',$ and $[Q,H]\leq Q$ while $H'\leq P$, we conclude that $[Q,H]=Q \text{ and }H'=P.$

    By \cite{LM86}, the group $Q\rtimes(H/P)$ is a Frobenius group and $H/P\cong G/G'$ is elementary abelian. Since an abelian Frobenius complement is cyclic, we obtain $H/P\cong C_p.$ As $H'=P$, it follows that $H/H'\cong C_p.$ For the finite $p$-group $H$ we have $H'\leq\Phi(H)$, so $H/\Phi(H)$ is cyclic and $H'=1$, contradicting $H'=P\neq1.$ 
    \end{proof}

     As usual,      we write $\omega_C(G)$ for the number of automorphism orbits of $G$ under the action of a subgroup $ C$ of $\Aut(G)$. Now we prove Theorem~\ref{thm:main2}.
    
    \begin{proof}[Proof of Theorem~\ref{thm:main2}]
    Recall that $G$ is a solvable group such that $\omega(G)=5$, $|\pi(G)|=2$ and $h(G)=2$.  We need to show that $G=K\rtimes H,$ where $K=\gamma_\infty(G)$, $K$ is a $p$-group and $H$ is a $q$-group. Moreover, one of the following holds:
    \begin{enumerate}
        \item $\omega_{\Aut(G)}(K)=4$, $H\cong C_q$, $G=K\rtimes C_q$ is Frobenius and the automorphisms of $G$ induced on $C_q$ act transitively on $C_q^\#$.
        \item $\omega_{\Aut(G)}(K)=3$ and
        $[K,H]=K$.
        \item $\omega_{\Aut(G)}(K)=2$ and
        $\omega(H)\leq 4$, and $[K,H]=K$.
    \end{enumerate}

    Since $G$ is a solvable CPPO-group of Fitting height $2$, it follows that $K=\gamma_\infty(G)$ is a nontrivial $p$-group for some $p\in\pi(G)$ (see \cite{SS24}). We claim that $p\nmid |G/K|$. Suppose, to the contrary, that $p\mid |G/K|$.

    Let $\pi(G)=\{p,q\}$. Since $K$ is a $p$-group, we also have $q\mid |G/K|$. The quotient $G/K$ is nilpotent, and hence contains elements of orders $p$, $q$, and $pq$. Since $K$ is characteristic in $G$, automorphisms of $G$ induce automorphisms of $G/K$. Therefore $G\setminus K$ contains at least three $\Aut(G)$-orbits.
    
    Since $K\neq1$ and $\omega(G)=5$, it follows that $K^\#$ is a single $\Aut(G)$-orbit. As $\Phi(K)$ is characteristic in $G$ and $\Phi(K)<K$, we obtain $\Phi(K)=1.$ Thus $K$ is elementary abelian.
    
    Let $T$ be a system normalizer of $G$. Since
    $K=\gamma_\infty(G)$ is abelian, \cite[9.2.7]{Rob96} yields
    \[
    G=K\rtimes T.
    \]
    In particular, $T\cong G/K$. Moreover, $G/K$ is a nilpotent 4-orbit group, so by the classification \cite{LM86}, we have that $ T\cong G/K\cong C_p^n\times C_q^m. $ Write $T=A\times B, $ where $A\cong C_p^n$ and $B\cong C_q^m$, so that $$ G=K\rtimes(A\times B). $$

    Note that $ A\cap Z(KA)\neq 1 $, so $ [A,K]=1 $ while $ C_K(B)=1. $ This gives $ Z(G)=A $. Set $1\neq a\in A$ and $1\neq x\in K$. Given that the coset $aK$ is contained in a single orbit, it follows that the elements $a$ and $ax$ are in the same orbit. However, $ a\in Z(G), $ and $ax\notin Z(G), $ because $A\cap K=1$. This is impossible, because the center is characteristic. Hence $p\nmid |G/K|$.

    Therefore $G/K$ is a $q$-group. Since $K$ is a normal Hall $p$-subgroup of $G$, the Schur--Zassenhaus theorem thus gives a $q$-subgroup $H$ such that $ G=K\rtimes H. $ 

    We also record that $ [K,H]=K. $ Indeed, by coprime action, $ [K,H,H]=[K,H], $ and hence $[K,H]$ is contained in every sufficiently deep term of the lower central series of $G$. Thus  $ [K,H]\leq\gamma_\infty(G)=K. $ Conversely, the quotient $ G/[K,H] $ is the direct product of the images of the $p$-group $K$ and the $q$-group $H$, and is therefore nilpotent. Hence $ \gamma_\infty(G)\leq[K,H]. $ Consequently, $ [K,H]=K. $ 

    Set $ r=\omega_{\Aut(G)}(K). $ 
    Since $1<K<G$ is characteristic and $\omega(G)=5$, we deduce that $2\leq r\leq4. $ Moreover, there are exactly $5-r$ automorphism orbits outside $K$. Given that automorphisms of $G$ induce automorphisms of $G/K\cong H$, we obtain $ \omega(H)\leq 6-r. $ We now distinguish the three possible values of $r$.

    \textbf{Case 1: $r=4$.}
    In this case $G\setminus K$ is a single $\Aut(G)$-orbit. Since $G\setminus K$ is a single $\Aut(G)$-orbit, it follows that $H$ acts fixed-point-freely on $K$. Thus $ C_K(h)=1 $ for every $1\neq h\in H$, and $G=K\rtimes H$ is a Frobenius group. Furthermore, we have $\omega(H)=2$, and hence $H$ is elementary abelian. Since an abelian Frobenius complement is cyclic, it follows that $ H\cong C_q. $ Thus $ G=K\rtimes C_q $ is Frobenius.

     Moreover, the induced automorphism group $A_H$ acts transitively on $H^\#$. Indeed, let $h_1,h_2\in H^\#$. Since $G\setminus K$ is a single $\Aut(G)$-orbit, there exists $\alpha\in\Aut(G)$ such that $ h_1^\alpha=h_2. $ Then $ 1\neq h_2\in H^\alpha\cap H. $ Since distinct Frobenius complements intersect trivially, we obtain $ H^\alpha=H. $ Thus $\alpha\in N_{\Aut(G)}(H)$, and so $\alpha|_H\in A_H$. Hence $A_H$ is transitive on $H^\#$, and therefore $ \omega_{A_H}(H)=2. $

    \textbf{Case 2: $r=3$.}
    In this case there are two automorphism orbits outside $K$. Therefore $ \omega(H)\leq 6-r=3. $ Together with the equality $[K,H]=K$, this gives the second possibility.

    \textbf{Case 3: $r=2$.}
    In this case $K^\#$ is a single $\Aut(G)$-orbit. Since $\Phi(K)$ is characteristic in $G$ and is a proper subgroup of the finite $p$-group $K$, we must have $ \Phi(K)=1. $ Hence $K$ is elementary abelian. Moreover, $ \omega(H)\leq 6-r=4. $ Together with $[K,H]=K$, this gives the third possibility.    
    The proof is complete.
    \end{proof}

    In the appendix, we present the Table~\ref{tab:examples-main2} that contais examples of 5-orbit groups illustrating all cases that appear in Theorem~\ref{thm:main2}. These groups were found using the GAP system \cite{GAP}.
    We end this section with some observations about the converse of Theorem~\ref{thm:main2}.
    
    \begin{rem}
    There are groups with the structure described in the cases of Theorem~\ref{thm:main2} that are not 5-orbit groups. Indeed, the symmetric group $S_3$ is of type 3, but $\omega(S_3)=3$. Also, the dihedral group $D_{18}$ is of type 2, but $\omega(D_{18})=4$. 
    \end{rem}

    The converse of case~(1) will follow from the orbit-counting formula for Frobenius groups established in Corollary~\ref{cor:converse-main2}.

    \section{Action of Frobenius groups and Spreads}

    We begin with several technical lemmas that will be used throughout this section. In the next four results, we work under the following standing assumptions. 
    Let $V$ be a nonzero elementary abelian $p$-group and let
    \[
    Q\rtimes T\cong C_q\rtimes C_s,
    \]
    where $T=\langle t\rangle$ acts faithfully on $Q$. Assume that
    $Q\rtimes T$ acts on $V$ and that $Q$ acts fixed-point-freely on
    $V$, that is, $C_V(Q)=1$.
        
    \begin{lemma}\label{lem:cyclic-by-cyclic-action}
    Under the assumptions established, there exists $v\in V$ such that $|vt|=ps$ if $p\neq s$, and $|vt|=s^2$ if $p=s$.     
    \end{lemma}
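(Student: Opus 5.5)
The plan is to compute $|vt|$ directly in $V\rtimes T$ and reduce the claim to showing that the norm map of $t$ on $V$ is nonzero. Write $V$ additively as an $\mathbb{F}_p T$-module. Since $t^s=1$ in $T$, raising $vt$ to the $s$-th power gives
\[
(vt)^s = v^{t^{s-1}}\cdots v^{t}\,v = N(v), \qquad N=1+t+\cdots+t^{s-1}
\]
(up to the order of the factors, which is irrelevant since $V$ is abelian). Moreover $N(v)\in C_V(t)$, so $N(v)$ has order $1$ or $p$. The image of $vt$ in $T$ has order $s$, so $s$ divides $|vt|$, and therefore $|vt|=s\cdot|N(v)|$. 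Thus, in both cases of the statement ($ps$ if $p\neq s$, $s^2$ if $p=s$), it suffices to find $v\in V$ with $N(v)\neq 0$, i.e.\ to show that $N$ is not the zero endomorphism of $V$.

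First I will record that $q\neq p$: a $p$-group acting on a nonzero $p$-group has nontrivial fixed points, contradicting $C_V(Q)=1$. I will also note that $Q\rtimes T$ is a Frobenius group with kernel $Q$. Indeed, $|Q|=q$ is prime and $T$ acts faithfully on $Q$, so $T$ embeds in $\Aut(C_q)$, and every nontrivial element of $T$ acts on $Q$ without nontrivial fixed points.

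The main step is to show that $V$ is a free $\mathbb{F}_pT$-module; this is where I expect the real work to lie. I would extend scalars to $\overline{V}=V\otimes\overline{\mathbb{F}}_p$. Because $q\neq p$, the action of $Q$ on $\overline{V}$ is diagonalizable, giving a decomposition $\overline{V}=\bigoplus_\chi \overline{V}_\chi$ into eigenspaces for the linear characters $\chi$ of $Q$. The condition $C_V(Q)=1$ gives $C_{\overline V}(Q)=0$, so only nontrivial $\chi$ occur. The element $t$ maps $\overline{V}_\chi$ onto $\overline{V}_{\chi^{t}}$. Since $T$ acts fixed-point-freely on $Q$, it also acts semiregularly on the nontrivial characters of $Q$ (dually, $\chi^{t^i}=\chi$ with $t^i\neq1$ would force $\chi$ to be trivial on $[Q,t^i]=Q$). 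Grouping the eigenspaces into $T$-orbits then gives $\overline{V}\cong\bigoplus \operatorname{Ind}_1^T(\overline{V}_\chi)$, which is a free $\overline{\mathbb{F}}_pT$-module.

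Finally, on a free module the norm is nonzero: on a copy of the regular module, $N\cdot 1=\sum_{i}t^i\neq0$. Hence $N\neq0$ on $\overline{V}$. Since $N$ is defined over $\mathbb{F}_p$ and $\overline{V}=V\otimes\overline{\mathbb{F}}_p$, it follows that $N\neq 0$ on $V$. Choosing $v$ with $N(v)\neq0$ gives $|vt|=sp$, which equals $ps$ when $p\neq s$ and $s^2$ when $p=s$.
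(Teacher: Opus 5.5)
Your proposal is correct and follows the paper's route: $V$ is a free $\mathbb F_pT$-module, so some $v$ has $N(v)=v\,v^{t}\cdots v^{t^{s-1}}\neq 1$, hence $(vt)^s\neq 1$ and $|vt|=ps$. The differences are that you prove the freeness yourself (using $q\neq p$, the eigenspace decomposition over $\overline{\mathbb F}_p$, and the semiregularity of $T$ on the nontrivial characters of $Q$) where the paper cites \cite[Lemma~2.5]{KMS14}, and that you spell out the step $|vt|=s\cdot|N(v)|$, which the paper leaves implicit.
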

    
    \begin{proof}
    Since $Q\rtimes T$ is a Frobenius group with kernel $Q$ and $C_V(Q)=1$, it follows from \cite[Lemma~2.5]{KMS14} that $V$ is a free $\mathbb F_pT$-module. Thus, we may write
    $$ V\cong(\mathbb F_pT)^d. $$
    Let $v$ be a generator of one of the free factors $\mathbb F_pT$. Then $ v,v^t,\ldots,v^{t^{s-1}}  $ are linearly independent, and hence,  $ vv^t\cdots v^{t^{s-1}}\neq1. $ Using the standard identity $(vt)^n= v\,v^{t}\cdots v^{t^{n-1}}t^n$, we have
    $$ (vt)^s = vv^t\cdots v^{t^{s-1}} \neq1. $$
    It follows that $|vt|=ps$ if $p\neq s$, and $|vt|=s^2$ if $p=s$.
    \end{proof}
    
    \begin{lemma}\label{lem:dimension-formula}
        Under the assumptions established, the following equality holds, $$ \dim_{\mathbb F_p}V=s\dim_{\mathbb F_p}C_V(T). $$
    \end{lemma}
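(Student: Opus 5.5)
The plan is to reuse the free-module structure from the proof of Lemma~\ref{lem:cyclic-by-cyclic-action}. Since $Q\rtimes T$ is Frobenius with kernel $Q$ and $C_V(Q)=1$, \cite[Lemma~2.5]{KMS14} shows that $V$ is a free $\mathbb F_pT$-module, say
\[
V\cong(\mathbb F_pT)^d .
\]
Comparing dimensions gives $\dim_{\mathbb F_p}V=sd$. It then suffices to prove that $\dim_{\mathbb F_p}C_V(T)=d$.

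Since $C_V(T)$ is additive over direct sums, I will reduce to a single copy of the regular module $\mathbb F_pT$ and show that its $T$-fixed points form a $1$-dimensional space. An element $\sum_{i=0}^{s-1}a_i t^i$ of $\mathbb F_pT$ is fixed by multiplication by $t$ exactly when its coefficients are invariant under the cyclic shift $a_i\mapsto a_{i-1}$. Because $T=\langle t\rangle$ permutes the basis $\{t^i\}$ regularly, this forces all $a_i$ to be equal. Hence
\[
C_{\mathbb F_pT}(T)=\mathbb F_p\cdot\sum_{i=0}^{s-1}t^i ,
\]
which is $1$-dimensional. This computation does not depend on whether $p$ divides $s$. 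Summing over the $d$ free factors gives $\dim C_V(T)=d$, and therefore $\dim V=s\dim C_V(T)$.

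The only real input is the freeness statement already invoked in the proof of Lemma~\ref{lem:cyclic-by-cyclic-action}, and I will take it as given. If one wanted to avoid it in the coprime case $p\nmid s$, there is a fallback via character theory. One would decompose $V\otimes\overline{\mathbb F}_p$ into $Q$-eigenspaces. The eigenvalues are all nontrivial characters of $Q$, because $C_V(Q)=1$. The group $T$ permutes these eigenspaces semiregularly, since $T$ acts faithfully on $Q$ and $T\cap Q=1$. Each $T$-orbit of eigenspaces then contributes exactly one dimension's worth of $T$-fixed vectors per dimension of a single eigenspace, which recovers the same formula. I expect no genuine obstacle; the main care is to confirm that the shift-invariance argument does not secretly use $p\nmid s$.
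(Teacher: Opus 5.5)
Your proposal is correct and follows the paper's route. Both arguments use that $V$ is a free $\mathbb F_pT$-module (the paper cites \cite[Theorem~2.7(a)]{KMS14}) and then read off $\dim_{\mathbb F_p}V=sd$ and $\dim_{\mathbb F_p}C_V(T)=d$. Your explicit check that the regular module $\mathbb F_pT$ has a one-dimensional $T$-fixed space spanned by $\sum_{i}t^i$, whether or not $p\mid s$, fills in the second equality, which the paper states without comment.
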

    
    \begin{proof}
        Since $Q\rtimes T$ is a Frobenius group with kernel $Q$ and $C_V(Q)=1$, it follows from \cite[Theorem~2.7(a)]{KMS14} that $V$ is a free $\mathbb F_pT$-module and $|V|=|C_V(T)|^{|T|}.$ Thus, writing $V\cong(\mathbb F_pT)^d,$ we have $ \dim_{\mathbb F_p}V=sd\text{ and }\dim_{\mathbb F_p}C_V(T)=d.$
    \end{proof}
    
    \begin{lemma}\label{lem:trivial-inter-centralizers}
        Under the assumptions established, if $T_1,\ldots,T_q$ are the conjugates of $T$ in $Q\rtimes T$, then $$C_V(T_i)\cap C_V(T_j)=1$$ for $i\neq j$.
    \end{lemma}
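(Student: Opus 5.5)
Let $T_i=T^{x}$ and $T_j=T^{y}$ with $x\neq y$ in $Q$; this is possible since $Q$ is a normal complement to $T$. The plan is to show that any vector fixed by both $T_i$ and $T_j$ is fixed by a nontrivial element of $Q$. The hypothesis $C_V(Q)=1$ then forces such a vector to be trivial.

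First, I would record the Frobenius structure. Since $T$ acts faithfully on the group $Q$ of prime order $q$, every nontrivial element of $T$ acts fixed-point-freely on $Q$. Hence $Q\rtimes T$ is a Frobenius group with kernel $Q$ and complement $T$. Consequently the $q$ conjugates $T_1,\dots,T_q$ of $T$ are pairwise distinct and intersect trivially, and every nontrivial element of $Q$ acts fixed-point-freely on $V$. For the last point note that $Q$ has prime order, so $C_V(g)=C_V(Q)=1$ for each $1\neq g\in Q$.

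Next, take $v\in C_V(T_i)\cap C_V(T_j)$. Both subgroups are generated by conjugates of $t$, say $t^{x}$ and $t^{y}$ with $x\neq y$ in $Q$, and both fix $v$. Then $v$ is fixed by $\langle T_i,T_j\rangle$, and in particular by the element
\[
g=(t^{x})^{-1}t^{y}=x^{-1}t^{-1}x\,y^{-1}t\,y .
\]
Reducing modulo $Q$, the element $g$ maps to $t^{-1}t=1$, so $g\in Q$. It remains to check $g\neq1$. If $g=1$ then $t^{x}=t^{y}$, so $yx^{-1}$ centralizes $t$. But $yx^{-1}$ is a nontrivial element of $Q$, and $C_Q(t)=1$ by faithfulness and the prime order of $Q$; this is a contradiction. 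Hence $1\neq g\in Q$ fixes $v$, so $v\in C_V(g)=C_V(Q)=1$.

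There is no serious obstacle here. The only point requiring care is verifying that $g$ is a nontrivial element of $Q$, which uses $Q\trianglelefteq Q\rtimes T$ and $C_Q(t)=1$. Alternatively, one can argue that $\langle T_i,T_j\rangle$ is the whole group $Q\rtimes T$: it contains $T_i$ properly, since $T_j\neq T_i$, and $T_i$ has index $q$, which is prime. Then $v\in C_V(Q\rtimes T)\le C_V(Q)=1$.
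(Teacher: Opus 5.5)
Your proof is correct and is essentially the paper's argument: the paper likewise takes generators of $T_i$ and $T_j$ with the same image in $(Q\rtimes T)/Q$. Their quotient is then a nontrivial element of $Q$ fixing every vector of $C_V(T_i)\cap C_V(T_j)$, which contradicts $C_V(Q)=1$; you additionally make explicit two points the paper leaves implicit, namely why this element is nontrivial and why fixing a single nontrivial element of $Q$ suffices (since $|Q|=q$ is prime, $C_V(g)=C_V(Q)$).
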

    
    \begin{proof}
    Let $T_i\neq T_j$ be two conjugates of $T$. Choose generators $t_i\in T_i$ and $t_j\in T_j$ having the same image in $(Q\rtimes T)/Q$. Then $1\neq t_it_j^{-1}\in Q.$ If $1\neq x\in C_V(T_i)\cap C_V(T_j),$ then $x$ is fixed by $t_it_j^{-1}$, contradicting $C_V(Q)=1$. Thus $C_V(T_i)\cap C_V(T_j)=1.$
    \end{proof}    

    We briefly recall some elementary terminology from finite projective geometry that will be needed below. For general background, we refer to Hirschfeld~\cite{Hir98}; see also \cite{Beu75} for results on spreads.
    
    Let $E$ be an $n$-dimensional vector space over the finite field $\mathbb F_p$. The projective space associated with $E$ is denoted by $ \operatorname{PG}(E)=\operatorname{PG}(n-1,p). $ Its points are the one-dimensional subspaces of $E$. 
        A \emph{partial $d$-spread} of $E$ is a collection $\mathcal S$ of $d$-dimensional subspaces of $E$ such that $ U\cap W=0 $ for any two distinct $U,W\in\mathcal S$. It is called a \emph{$d$-spread} if, in addition, 
    $$ E^\#=\bigsqcup_{W\in\mathcal S}W^\#, $$ 
    where $X^\#=X\setminus\{0\}$. Equivalently, the projective subspaces $\operatorname{PG}(W)$, for $W\in\mathcal S$, form a $(d-1)$-spread of $\operatorname{PG}(n-1,p)$, that is, they partition the points of the projective space.
    
    If $\mathcal S$ is a $d$-spread of $E$, then necessarily $d\mid n$, and a straightforward counting argument gives  
    $$ |\mathcal S|   =\frac{p^n-1}{p^d-1}. $$
    Indeed, every member of $\mathcal S$ contains $p^d-1$ nonzero vectors,  and the sets $W^\#$, with $W\in\mathcal S$, form a partition of $E^\#$.

    Now we apply these results to the context of automorphism orbits. For the next two results, in addition to the standing assumptions above, suppose that
    \[
    G=V\rtimes(Q\rtimes T)
    \]
    and that $V^\#$ is a single $\Aut(G)$-orbit.
        
    \begin{lemma}\label{lem:spread}
    Under the assumptions established above, if $T_1,\ldots,T_q$ are the conjugates of $T$ in $Q\rtimes T$, then $$ \mathcal S=\{C_V(T_1),\ldots,C_V(T_q)\} $$ is a $d$-spread of $V$, where $ d=\dim_{\mathbb F_p}C_V(T). $ Equivalently, the projective subspaces $ \operatorname{PG}(C_V(T_i)) $ form a $(d-1)$-spread of $ \operatorname{PG}(\dim_{\mathbb F_p}V-1,p). $ 
    \end{lemma}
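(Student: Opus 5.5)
The plan is to check the two defining properties of a $d$-spread: the subspaces $C_V(T_i)$ intersect pairwise trivially, and their nonzero vectors cover $V^\#$. The first property is already Lemma~\ref{lem:trivial-inter-centralizers}. Each $C_V(T_i)$ is an $\mathbb F_p$-subspace of dimension $d=\dim_{\mathbb F_p}C_V(T)$, because $T_i$ is conjugate to $T$ by an element of $Q\rtimes T$, and that element maps $C_V(T)$ isomorphically onto $C_V(T_i)$. By Lemma~\ref{lem:dimension-formula}, $\dim V=sd$ and $V\neq 1$, so $d\ge 1$ and $C_V(T)$ contains some $v_0\neq 1$. So everything reduces to showing that every $v\in V^\#$ lies in some $C_V(T_i)$.

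\textbf{Step 1: $V$ is $\Aut(G)$-invariant.} Since $V^\#$ is a single $\Aut(G)$-orbit, the set $V^\#$ is $\Aut(G)$-stable, hence so is $V$. Therefore every $\alpha\in\Aut(G)$ induces an automorphism $\bar\alpha$ of $G/V\cong Q\rtimes T$.

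\textbf{Step 2: transport the centralizing element.} Fix $v\in V^\#$ and choose $\alpha\in\Aut(G)$ with $v_0^\alpha=v$. Then $g=t^\alpha$ centralizes $v$ and has order $s$. Its image $gV=\bar\alpha(tV)$ in $Q\rtimes T$ also has order exactly $s$, since $\bar\alpha$ is an automorphism and $tV$ has order $s$.

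In the Frobenius group $Q\rtimes T$, the nontrivial elements of $Q$ have order $q$. Every element outside $Q$ lies in a unique conjugate $T_i$ of $T$. Because $T$ acts faithfully on $Q$ we have $s\mid q-1$, so $s\neq q$, and hence $gV\notin Q$. Thus $gV$ generates some $T_i$, say $gV=t_iV$ with $T_i=\langle t_i\rangle$, and so $g=wt_i$ for some $w\in V$.

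\textbf{Step 3: read off the action on $V$.} Since $V$ is abelian, $w$ acts trivially on $V$ by conjugation. Hence $g$ and $t_i$ induce the same linear map on $V$. Therefore
\[
v\in C_V(g)=C_V(t_i)=C_V(T_i).
\]
This step avoids any coprimality assumption between $p$ and $s$, which matters because $p=s$ is allowed, as in Lemma~\ref{lem:cyclic-by-cyclic-action}. So $V^\#=\bigcup_i C_V(T_i)^\#$, and this union is disjoint by Lemma~\ref{lem:trivial-inter-centralizers}, so $\mathcal S$ is a $d$-spread.

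The $T_i$ are distinct and pairwise intersect trivially, which together with $d\geq 1$ gives $|\mathcal S|=q$. As a consistency check, the count gives $q(p^d-1)=p^{sd}-1$, which matches $|\mathcal S|=(p^n-1)/(p^d-1)$ with $n=sd$. The projective reformulation is immediate from the definitions.

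The main obstacle is Step 2: one must make sure that the image of $t^\alpha$ in $G/V$ lands in a complement $T_i$ rather than in $Q$. This depends on $V$ being $\Aut(G)$-invariant, so that $\bar\alpha$ exists, and on $s\neq q$.
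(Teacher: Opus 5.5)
Your proof is correct and follows the paper's outline: pairwise trivial intersections come from Lemma~\ref{lem:trivial-inter-centralizers}, and $V^\#$ is covered by moving a fixed vector $v_0\in C_V(T)^\#$ with an automorphism $\alpha$ and locating $t^\alpha$ modulo $V$ in some conjugate $T_i$. The one real difference is a simplification: the paper first proves that $T^\alpha$ is $V$-conjugate to $T_i$, using $\ker N_{T_i}=(t_i-1)V$ from the freeness of $V$ over $\mathbb F_pT_i$, whereas you observe that $wt_i$ and $t_i$ already induce the same map on the abelian group $V$, so $C_V(t^\alpha)=C_V(T_i)$ with no conjugacy needed (and you place $t^\alpha V$ in a complement via the Frobenius partition of $Q\rtimes T$ rather than via conjugacy of complements).
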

    
    \begin{proof}
    By Lemma~\ref{lem:dimension-formula}, all the subspaces $C_V(T_i)$ have dimension $d$. Moreover, by Lemma~\ref{lem:trivial-inter-centralizers},
    $ C_V(T_i)\cap C_V(T_j)=0 $ whenever $i\neq j$.

    It remains to show that these subspaces cover $V^\#$. Choose  $1\neq v_0\in C_V(T)$. Let $v\in V^\#$ be arbitrary. Since $V^\#$ is a single $\Aut(G)$-orbit, there exists $\alpha\in\Aut(G)$ such that $v_0^\alpha=v$. Hence $v\in C_V(T^\alpha)$.
    
    Since $V$ is characteristic in $G$, the automorphism $\alpha$
    induces an automorphism of $G/V\cong Q\rtimes T$. Thus the image of $T^\alpha$ in $G/V$ is conjugate to the image of $T$. After replacing $T^\alpha$ by a suitable conjugate, we may therefore assume that $T^\alpha$ is a complement to $V$ in $V\rtimes T_i$ for some conjugate $T_i$ of $T$ in $Q\rtimes T$.
        We claim that $T^\alpha$ is conjugate to $T_i$ by an element of $V$. Let $T_i=\langle t_i\rangle$. Writing $V$ additively, a generator of $T^\alpha$ may be written as $ut_i$ for some $u\in V$. Since $|ut_i|=s$, we have $ u+u^{t_i}+\cdots+u^{t_i^{s-1}}=0. $ Set $ N_{T_i}=1+t_i+\cdots+t_i^{s-1}. $ Then $u\in\ker N_{T_i}$. Since $V$ is a free $\mathbb F_pT_i$-module, we have $ \ker N_{T_i}=(t_i-1)V. $ Indeed, $ (t_i-1)N_{T_i}=t_i^s-1=0, $ so $(t_i-1)V\subseteq\ker N_{T_i}$. On each regular summand $\mathbb F_pT_i$, both subspaces have dimension $s-1$, and hence the equality follows on $ V\cong(\mathbb F_pT_i)^d. $ It follows that $u\in(t_i-1)V$, and therefore $ut_i$ is conjugate to $t_i$ by an element of $V$. Hence $ T^\alpha=T_i^w $ for some $w\in V$.

    As $V$ is abelian, conjugation by elements of $V$ does not change the action induced on $V$. Consequently, $C_V(T^\alpha)=C_V(T_i)$. Hence $v\in C_V(T_i)$ for some $i$, and therefore
    \[
    V^\#=\bigcup_{i=1}^q C_V(T_i)^\#.
    \]
    By Lemma~\ref{lem:trivial-inter-centralizers}, this union is   disjoint. Thus $\mathcal S=\{C_V(T_1),\ldots,C_V(T_q)\}$ is a $d$-spread of $V$.

        \end{proof}
    
    \begin{corollary}\label{cor:counting}
    Under the assumptions established, 
        $$p^{sd}-1=q(p^d-1).$$
    \end{corollary}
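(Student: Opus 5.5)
The plan is a direct count of nonzero vectors using Lemma~\ref{lem:spread} and Lemma~\ref{lem:dimension-formula}. Put $n=\dim_{\mathbb F_p}V$ and $d=\dim_{\mathbb F_p}C_V(T)$. Lemma~\ref{lem:dimension-formula} gives $n=sd$, so $|V^\#|=p^{sd}-1$.

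By Lemma~\ref{lem:spread}, the family $\mathcal S=\{C_V(T_1),\ldots,C_V(T_q)\}$ is a $d$-spread of $V$. Hence
\[
V^\#=\bigsqcup_{i=1}^{q} C_V(T_i)^\#,
\]
and each $C_V(T_i)^\#$ has $p^d-1$ elements. Here each $T_i$ is conjugate to $T$ by an element of $Q\rtimes T$, so $C_V(T_i)$ is the image of $C_V(T)$ under a linear automorphism of $V$. This makes the dimensions equal, which is already stated in the proof of Lemma~\ref{lem:spread}.

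The one point to check is that the $q$ subspaces are pairwise distinct, so that $|\mathcal S|=q$. Two things are needed:
\begin{itemize}
\item $T$ has exactly $q$ conjugates in $Q\rtimes T$. Since $T$ acts faithfully on $Q\cong C_q$, we have $N_{Q\rtimes T}(T)\cap Q=C_Q(T)=1$, so $N_{Q\rtimes T}(T)=T$ and the index is $q$.
\item The centralizers $C_V(T_i)$ and $C_V(T_j)$ differ for $i\neq j$. Lemma~\ref{lem:trivial-inter-centralizers} gives $C_V(T_i)\cap C_V(T_j)=1$, and each $C_V(T_i)$ is nonzero because $d\ge 1$. Indeed $V\neq 0$ and $n=sd$.
\end{itemize}

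Counting then gives $p^{sd}-1=q(p^d-1)$, which is the same as the spread-size formula $|\mathcal S|=(p^n-1)/(p^d-1)$ with $n=sd$. No step is a real obstacle; the only care needed is the distinctness check above.
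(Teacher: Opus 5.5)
Your proposal is correct and follows the paper's proof: both apply Lemma~\ref{lem:spread}, together with $\dim_{\mathbb F_p}V=sd$ from Lemma~\ref{lem:dimension-formula}, and count the $p^{sd}-1$ nonzero vectors of $V$ across the members of the spread. Your extra check that $T$ has exactly $q$ conjugates and that the subspaces $C_V(T_i)$ are pairwise distinct (so $|\mathcal S|=q$) is sound; the paper leaves it implicit. That check uses $T\neq 1$, which is guaranteed because $Q\rtimes T$ is assumed to be Frobenius.
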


    \begin{proof}
    By Lemma~\ref{lem:spread}, the $q$ subspaces $C_V(T_i)$ form a $d$-spread of the $sd$-dimensional $\mathbb F_p$-space $V$. Hence $q=\frac{p^{sd}-1}{p^d-1}$, and therefore    $p^{sd}-1=q(p^d-1)$.

        \end{proof}

    We next consider the case where $|\pi(G)|=2$ and $h(G)=3$. We first prove the following lemma.
    
    \begin{lemma}\label{lem:2primes-EPPO}
    Let $G$ be a finite solvable group such that $\omega(G)=5$, $|\pi(G)|=2,$ $ h(G)=3.$ Then $G$ is an EPPO-group.
    \end{lemma}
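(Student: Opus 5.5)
The plan is to mirror the argument in Proposition~\ref{prop:properties} that forced the EPPO property when $h(G)=4$. Consider the upper Fitting series
\[
1=F_0(G)<F_1(G)<F_2(G)<F_3(G)=G.
\]
Each $F_i(G)$ is characteristic, so each of the three layers $F_1(G)^\#$, $F_2(G)\setminus F_1(G)$ and $G\setminus F_2(G)$ is a nonempty union of $\Aut(G)$-orbits. Together with $\{1\}$ this already accounts for at least four of the five orbits. Hence exactly one layer contains two orbits, and the other two layers are single orbits. Elements in a single orbit share the same order, so they are all $p$-elements, all $q$-elements, or all of mixed order.

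First I show that no single-orbit layer can have mixed order. The bottom layer $F_1(G)^\#$ is nilpotent and nontrivial. If it were a single orbit it would be an elementary abelian $r$-group, so in particular of prime-power order. For an upper layer $F_i\setminus F_{i-1}$, note that $F_i/F_{i-1}$ is a nontrivial nilpotent group. If $F_i\setminus F_{i-1}$ were a single orbit, its image in $F_i/F_{i-1}$ would be a single $\Aut$-invariant set. This makes $F_i/F_{i-1}$ a $2$-orbit-type quotient, hence elementary abelian $r$-group, and then every $x\in F_i\setminus F_{i-1}$ has $x^r\in F_{i-1}$.

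To upgrade "order $r$ modulo $F_{i-1}$" to "prime-power order", I use the two-prime hypothesis. Take a Sylow $r$-subgroup of $F_i$ and pick an element $x$ of it lying outside $F_{i-1}$. Such an $x$ exists because $F_i/F_{i-1}$ is an $r$-group, and $x$ is an $r$-element. Since the layer is a single orbit, every element of the layer is then an $r$-element.

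The remaining step is to handle the layer containing two orbits. Suppose some element $g$ has order divisible by both $p$ and $q$, and write $g=g_pg_q$. Let $g$ lie in layer $i$. The single-orbit layers consist of prime-power elements, so layer $i$ must be the two-orbit one. Its two orbits are then the mixed one containing $g$ and one other orbit. The other orbit contains the $r$-element obtained by the Sylow argument above (the coset structure again gives elements of prime-power order in the layer). Now use that $\pi(G)=\{p,q\}$ and that each of the three layers has its own prime. Consecutive quotients $F_{i}/F_{i-1}$ must involve both primes across the series; otherwise two consecutive nilpotent factors of the same prime would merge and lower $h(G)$. The factors are $r$-groups in the single-orbit layers. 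I would then show that the mixed element $g$ forces the components $g_p$ and $g_q$ to lie in different layers. The only possibility is that $g_p,g_q$ have images in $F_i/F_{i-1}$ of coprime orders, making $F_i/F_{i-1}$ contain elements of orders $p$, $q$ and $pq$. This yields three orbits in that layer, which is a contradiction.

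I expect the main obstacle to be this last counting step: ruling out that the two-orbit layer consists of one prime-power orbit and one mixed orbit. If the direct count fails, I would fall back on the dichotomy of Theorem~\ref{thm:main2}-style structure. One could take $G/F_1(G)$, which has Fitting height $2$ and at most four orbits, and invoke the classification in \cite{LM86} together with Corollary~\ref{prop:omega-four-cppo} to pin down its structure. One would then lift the element orders back, using that $F_1(G)$ is then an elementary abelian $r$-group acted on fixed-point-freely by a suitable subgroup, in the spirit of Lemma~\ref{lem:cyclic-by-cyclic-action}.
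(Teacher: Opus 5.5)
Most of your reduction is correct. The layers of the upper Fitting series are unions of $\Aut(G)$-orbits, and exactly one layer carries two orbits. Single-orbit layers have elementary abelian factors and consist of prime-power elements. Your own count (a nilpotent factor involving both primes has elements of orders $p$, $q$, $pq$, hence three orbits in its layer) shows that every factor $F_i(G)/F_{i-1}(G)$ has prime-power order, so the factors follow a $p$--$q$--$p$ pattern.

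The decisive step, however, fails. Since $g_p$ and $g_q$ are powers of $g\in F_i(G)$, saying they lie in different layers means one of them lies in $F_{i-1}(G)$. So the image of $g$ in $F_i(G)/F_{i-1}(G)$ has prime-power order. The claim that this factor contains elements of orders $p$, $q$ and $pq$ does not follow, and it contradicts the single-prime conclusion you just derived. Counting element orders in quotients cannot separate the mixed orbit from the prime-power orbit inside the two-orbit layer. That is exactly the configuration you flag as the obstacle, and your fallback (determine $G/F_1(G)$ via the $4$-orbit classification and ``lift element orders back'') does not say how to exclude it. So the heart of the lemma is left unproved.

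What is missing is structural input in the two surviving cases.

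\emph{Mixed element in $F_2(G)\setminus F_1(G)$.} The characteristic subgroup $F_2(G)$ then contains exactly four $\Aut(G)$-orbits, of four distinct element orders, so $\omega(F_2(G))=4$. The Laffey--MacHale classification of solvable $4$-orbit non-$p$-groups with an element of mixed order forces $F_2(G)\cong P\times Q$, which is nilpotent. This contradicts $F_2(G)>F(G)$. It is the paper's first step, which the paper applies to $K=\gamma_\infty(G)$ to show that $K$ is EPPO.

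\emph{Mixed element in $G\setminus F_2(G)$.} The single orbit $F_2(G)\setminus F_1(G)$ then consists of elements of order $q$. Hence a Sylow $q$-subgroup of $F_2(G)$ acts fixed-point-freely on $F_1(G)$, and is therefore cyclic of order $q$. Now $g_q\in F_2(G)\setminus F_1(G)$ commutes with $g_p\notin F_2(G)$. This gives $g_pF_1(G)\in C_{G/F_1(G)}(F_2(G)/F_1(G))=F_2(G)/F_1(G)$, a contradiction.

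The paper handles this second case differently. It shows $\omega(K)=3$, so $K\cong C_p^n\rtimes C_q$ is Frobenius. Lemma~\ref{lem:cyclic-by-cyclic-action} then produces elements of orders $p$, $p^2$ and $pq$ outside $K$, giving at least six orbits.
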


    \begin{proof}
        By the preceding results, $G$ is a CPPO-group. Put $ K=\gamma_\infty(G)$ and $ L=\gamma_\infty(K). $ Given that $h(G)=3$, we have $h(K)=2$. Moreover, by Lemma 2.6 $L$ is a $p$-group, $K/L$ is a $q$-group, $G/K$ is a $p$-group. Assume that $G$ is not EPPO.

        First assume that $K$ is not an EPPO-group. Thus $ \omega(K)\geq4. $ Since $K$ is a proper characteristic subgroup of $G$, it follows that
        $$ 5=\omega(G)\geq \omega(K)+1, $$ 
        and therefore $\omega(K)=4$. By the classification of finite solvable non-$p$-groups with four automorphism orbits, we have a contradiction.

        Now let $x\in G\setminus K$ be an element of order $pq$. Since both primes divide $|K|$ and $K$ is EPPO, there are thus at least three automorphism orbits contained in $K$.  Moreover, given that $G/K$ is a non-trivial $p$-group, there exists a $p$-element $t\in G\setminus K$. Thus $x$ and $t$ belong to distinct automorphism orbits outside $K$. Since $\omega(G)=5$, we conclude that $ \omega_{\Aut(G)}(K)=3 $ and that $G\setminus K$ consists of exactly two automorphism orbits.
Consequently, $ \omega(K)=3. $ 

    By the classification of 3-orbit groups, $ K=V\rtimes Q, $ where $ V\cong C_p^n,  Q\cong C_q, $ and $Q$ acts fixed-point-freely on $V$. Now the $p-q-p$ structure of $G$, together with Lemma~\ref{lem:cyclic-by-cyclic-action}, yields elements $ t,\; wt\in G\setminus K $ such that $ |t|=p$ \text{and} $ |wt|=p^2.$ Hence $G\setminus K$ contains three distinct automorphism orbits.
        Since $K$ already contains three automorphism orbits, we deduce that $ \omega(G)\geq 3+3=6, $ a contradiction.
    \end{proof}

    We can now determine the groups satisfying $|\pi(G)|=2$ and  $ h(G)=3$ reduce the possibilities to two types of groups. We write $ O_p(G)$ to denote the maximal normal $p$-subgroup of $G$.
        
    \begin{theorem}\label{thm:5-orbit-2-primes-2-fit}
        Let $G$ be a finite solvable group such that $\omega(G)=5$, $|\pi(G)|=2,$ $ h(G)=3.$ Then $
        G\cong S_4 $ or $G\cong C_2^4\rtimes D_{10},$ where the subgroup $C_5$ of $D_{10}$ acts irreducibly on $C_2^4$.
    \end{theorem}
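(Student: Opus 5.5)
By Lemma~\ref{lem:2primes-EPPO}, $G$ is an EPPO-group. Write $K=\gamma_\infty(G)$ and $L=\gamma_\infty(K)$. Then $L$ is a $p$-group, $K/L$ is a $q$-group and $G/K$ is a $p$-group. Since $1<L<K<G$ are characteristic and $G$ is EPPO, the five orbits must be $\{1\}$, the $p$-elements of $L^\#$, the $q$-elements of $K\setminus L$, and the $p$-elements outside $K$, which account for at least two orbits. Elements outside $K$ are $p$-elements, so $G\setminus K$ carries at least one orbit. My first step is to show that the count forces $L^\#$ to be a single orbit, $K\setminus L$ to be a single orbit, and $G\setminus K$ to consist of at most two orbits.

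Counting then gives the following structure.
\begin{itemize}
\item From $\Phi(L)$ characteristic, $L=V$ is elementary abelian.
\item $K/L$ is a $q$-group all of whose nontrivial elements are fused. Hence $K/L$ is elementary abelian, and $L\rtimes$(Sylow $q$) is Frobenius since $K$ is EPPO, so no $q$-element centralizes a nontrivial $p$-element.
\item The Sylow $q$-subgroup is therefore an abelian Frobenius complement, hence cyclic, so $Q\cong C_q$ and $K=V\rtimes C_q$ is a 3-orbit group.
\item $G/K$ is a $p$-group, and $G/V$ is again EPPO of Fitting height $2$ with $G/V=Q\rtimes P_0$, where $P_0$ acts faithfully on $Q$ (since $\gamma_\infty$ computations give $C_{P_0}(Q)\le$ the kernel). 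Thus $P_0$ embeds in $\Aut(C_q)=C_{q-1}$ and is cyclic, $P_0=T\cong C_s$ with $s$ a power of $p$.
\end{itemize}
So we are exactly in the standing assumptions of Section~4, $G=V\rtimes(Q\rtimes T)$ with $V^\#$ a single orbit.

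Next I use EPPO together with Lemma~\ref{lem:cyclic-by-cyclic-action}. If $s$ is not prime, or more precisely whenever $T$ has an element of order $p^2$, I expect an extra orbit of $p$-elements outside $K$ of order $p^2$ or $p^3$, in addition to elements of order $p$. The lemma also produces $vt$ of order $s^2=p^2$ when $s=p$, so outside $K$ there are elements of orders $p$ and $p^2$, giving exactly two orbits. To exclude $|T|=p^2$ or larger I will compare element orders: elements of orders $p$, $p^2$ and $p^3$ would give too many orbits. I conclude $s=p$, that $G\setminus K$ is exactly two orbits, and that the orbit count is $5$.

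Now Corollary~\ref{cor:counting} gives $p^{pd}-1=q(p^d-1)$, so $q=(p^{pd}-1)/(p^d-1)$. Also $p\mid q-1$, because $T$ acts faithfully on $C_q$. Modulo $p$, $q\equiv 1+p^d+\cdots\equiv1$ holds automatically, so the real constraint is the number-theoretic one. Set $x=p^d$, so that $q=1+x+\cdots+x^{p-1}$ must be prime. The crucial extra input, which I expect to be the main obstacle, is that $V^\#$ is a single $\Aut(G)$-orbit while the point stabilizer structure is constrained. The Frobenius kernel action of $C_q$ on $V=\mathbb F_p^{pd}$ forces the order of $p$ mod $q$ to divide $pd$; since $q>p^d$, that order is $p\cdot d'$. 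Combined with irreducibility considerations, this should yield $d=1$ and dimension $p$. I would next argue that the fused orbit of order-$p$ elements outside $K$ forces $p=2$. For $p$ odd I expect the elements $vt$ with $v$ ranging over different $T$-module components to split into more orbits, using $\Aut(G)$ acting on $T$-cosets and $[V,t]$ versus $C_V(t)$ classes. I am least sure of this step and would fall back on an explicit analysis of the conjugacy classes of $V\rtimes C_p$ in $V\rtimes (C_q\rtimes C_p)$.

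With $p=2$ we get $q=2^d+1$ prime, so $d\in\{1,2,4,8,\dots\}$ are Fermat cases, and $V=\mathbb F_2^{2d}$. For $d=1$ this gives $q=3$ and $G=C_2^2\rtimes S_3=S_4$. For $d=2$ it gives $q=5$ and $G=C_2^4\rtimes D_{10}$, with $C_5$ irreducible on $C_2^4$. For larger $d$ (e.g.\ $q=17$) I would exclude the group by showing $V^\#$ cannot be one orbit. The normalizer of $C_q$ in $\mathrm{GL}(2d,2)$ acting compatibly is $\Gamma L(1,2^{2d})$-like, and its order, $(2^{2d}-1)\cdot 2d$ restricted to automorphisms normalizing $Q\rtimes T$, is too small, or the mixed coset orbit splits. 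Finally I verify that $S_4$ and $C_2^4\rtimes D_{10}$ indeed have $\omega=5$, either directly or by GAP.
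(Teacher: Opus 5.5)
There is a genuine gap in the elimination step after Corollary~\ref{cor:counting}. The equation $q=(p^{pd}-1)/(p^d-1)$, together with ``$V^\#$ is one orbit'', does not cut the list down to $S_4$ and $C_2^4\rtimes D_{10}$. For instance, $p=3$, $d=1$ gives $q=13$, and $p=2$, $d=4,8,16$ give $q=17,257,65537$.

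All of these are realized by $V=\mathbb{F}_{p^{pd}}$, $Q\le\mathbb{F}_{p^{pd}}^\times$ of order $q$, and $T=\langle\tau\rangle$ with $\tau(a)=a^{p^d}$. Every multiplication $m_\lambda$ normalizes $Q\rtimes T$, since $m_\lambda\tau m_\lambda^{-1}=m_{\lambda^{1-p^d}}\tau$ and $\lambda^{1-p^d}\in Q$. So $V^\#$ \emph{is} a single $\Aut(G)$-orbit in these groups. This has three consequences for your proposal:
\begin{enumerate}
\item Your plan to exclude $q=17$ by showing that $V^\#$ is not one orbit cannot work.
\item Your prediction that ``irreducibility considerations'' force $d=1$ is false: the second group in the statement has $p=2$, $d=2$, $n=4$.
\item For odd $p$ you give no argument at all.
\end{enumerate}
What actually fails in these examples is the fusion of the elements of order $q$. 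An automorphism sending $x\in Q$ to $x^k$ normalizes $Q$, and its restriction to $V$ intertwines the actions of $x$ and $x^k$. This forces $k\in\langle p\rangle\le\mathbb{F}_q^\times$. Since $\operatorname{ord}_{17}(2)=8$ and $\operatorname{ord}_{13}(3)=3$, the set $K\setminus V$ splits into two orbits and four orbits, respectively.

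The missing ingredient, which is exactly what the paper uses, is that $K=V\rtimes C_q$ is itself a $3$-orbit group. Restrictions of automorphisms of $G$ to the characteristic subgroup $K$ lie in $\Aut(K)$, and $V^\#$ and $K\setminus V$ are single orbits. So by Laffey--MacHale's Theorem~2, $p$ is a primitive root modulo $q$ and $n=\ell(q-1)$. You can also prove this directly: use the intertwining argument above, plus the fact that a single orbit on $V^\#$ forces a single isotypic component. Combined with your own correct remark that $\operatorname{ord}_q(p)$ divides $n=pd$, this gives $q-1\le pd$, that is,
\[
p^d+p^{2d}+\cdots+p^{(p-1)d}\le pd .
\]
This is impossible for $p\ge 3$. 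For $p=2$ it reads $2^d\le 2d$, so $d\in\{1,2\}$ and $q\in\{3,5\}$. No normalizer computation in $\mathrm{GL}(2d,2)$ is needed.

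A smaller point: your opening claim that the count forces $L^\#$ and $K\setminus L$ to be single orbits is circular. It presupposes two orbits outside $K$, which you only obtain from Lemma~\ref{lem:cyclic-by-cyclic-action} after deriving the structure from those single orbits. The paper avoids this by first applying Higman's theorem to the EPPO-group $G$, which gives $G=P\rtimes(Q\rtimes R)$ with $Q$ and $R$ cyclic, and only then counting orbits.
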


    \begin{proof}
        Let $p\in\pi(G)$ such that $P=O_p(G)\neq 1$. By Lemma \ref{lem:2primes-EPPO}, $G$ is an EPPO-group. Since $h(G)=3$, it follows from \cite{Sou24,Hig57} that $G/P$ has order $p^aq^b$ and the Sylow subgroups are cyclic. Hence we may assume that $P$ is the Fitting subgroup $P=F(G)$, so that $G=P\rtimes(Q\rtimes R)$, where $Q$ is a cyclic $q$-group and $R$ is a cyclic $p$-group. Moreover, the EPPO condition implies that $C_P(x)=1$ and $C_Q(y)=1$, for all $x\in Q$ and $y\in R$.

        The hypotheses of Lemma~\ref{lem:cyclic-by-cyclic-action} are satisfied. Therefore, there is an element of order $p^2$ in $G$. As $\omega(G)=5$, the set $G\setminus K$ contains exactly two automorphism orbits, while $F(G)^\#$ and $ K\setminus F(G)$ are single automorphism orbits, where $K=\gamma_{\infty}(G)$. Moreover, $G=C_p^n\rtimes(C_q\rtimes C_p).$

        By Theorem~2 of \cite{LM86}, the equality $n=\ell(q-1)$ holds for some $\ell\geq1$, and $p$ is a primitive root modulo $q$. Furthermore, by Lemma~\ref{lem:dimension-formula}, for some positive integer $d$, we have $n=pd$ and $|C_P(R)|=p^d$.

        By Lemma~\ref{lem:dimension-formula}, for some positive integer $d$ we have $ n=pd$ and $\dim_{\mathbb F_p}C_P(R)=d. $ 
        Let $R_1,\ldots,R_q$ be the conjugates of $R$ in $Q\rtimes R$. By Lemma~\ref{lem:spread}, the fixed-point subspaces $ C_P(R_1),\ldots,C_P(R_q) $ form a $d$-spread of the $pd$-dimensional vector space $P$. Consequently, the standard counting formula for a spread gives $ q=\frac{p^{pd}-1}{p^d-1}, $ or equivalently $ q(p^d-1)=p^{pd}-1. $ 
                Hence
                \begin{align*}
              n\geq q-1=&\frac{p^{pd}-1}{p^d-1}-1\\
            =&p^d+p^{2d}+\cdots+p^{(p-1)d}\\
            \geq & p^{(p-1)d}.
        \end{align*} 
        Since $n=pd$, we obtain $ pd\geq p^{(p-1)d}. $ If $p\geq3$, then $(p-1)d\geq2d$, and hence $ p^{(p-1)d}\geq p^{2d}>pd, $ a contradiction. Therefore $p=2$. The inequality then becomes $ 2d\geq2^d, $ which holds only for $d=1$ or $d=2$. Since $ q=\frac{p^{pd}-1}{p^d-1}, $ we now have $$ q=\frac{2^{2d}-1}{2^d-1}=2^d+1. $$ 
        
        If $d=1$, then $n=2$ and $q=3$. Hence $ G\cong C_2^2\rtimes(C_3\rtimes C_2)   \cong C_2^2\rtimes D_6. $ 
                Therefore $ G\cong S_4. $
                
        If $d=2$, then $n=4$ and $q=5$, and hence $ G\cong C_2^4\rtimes(C_5\rtimes C_2)   \cong C_2^4\rtimes D_{10}. $ Moreover, $\operatorname{ord}_5(2)=4$, so the action of $C_5$ on $C_2^4$ is irreducible. Thus the two possibilities are precisely $ S_4$ and $C_2^4\rtimes D_{10}. $ 
    \end{proof}    

\section{Proof of Theorem~\ref{thm:main}}

    In this section we describe 5-orbit solvable non-$p$-groups with 3 distinct primes dividing the order of the group. First we make this useful observation.

    \begin{lemma}\label{lem:frobenius-orbits}
    Let $ G=N\rtimes H $ be a Frobenius group with kernel $N$ and complement $H$. Define $ A_N=\{\alpha|_N:\alpha\in\Aut(G)\}\leq\Aut(N) $ and $ A_H=\{\alpha|_H:\alpha\in N_{\Aut(G)}(H)\}\leq\Aut(H). $ Then 
    $$ \omega(G)=\omega_{A_N}(N)+\omega_{A_H}(H)-1. $$ 
    \end{lemma}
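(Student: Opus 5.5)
Since $G$ is Frobenius with kernel $N$, the partition $G=N\sqcup\bigcup_{g\in G}(H^g)^\#$ holds. The plan is to count $\Aut(G)$-orbits on each piece separately and then glue the two counts. The identity is common to both pieces, which accounts for the $-1$ in the formula.

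\textbf{Orbits inside $N$.} The kernel $N$ is characteristic in $G$: it is the Fitting subgroup, or equivalently the unique normal complement determined by the Frobenius partition. Hence every $\alpha\in\Aut(G)$ restricts to an automorphism of $N$. By definition of $A_N$, two elements of $N$ lie in the same $\Aut(G)$-orbit if and only if they lie in the same $A_N$-orbit. So $N$ is a union of exactly $\omega_{A_N}(N)$ orbits of $\Aut(G)$, one of which is $\{1\}$.

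\textbf{Orbits outside $N$.} Every element of $G\setminus N$ lies in $(H^g)^\#$ for a unique conjugate $H^g$. Inner automorphisms lie in $\Aut(G)$, so each such element is $\Aut(G)$-conjugate to an element of $H^\#$. It remains to show that $h_1,h_2\in H^\#$ are $\Aut(G)$-equivalent if and only if they are $A_H$-equivalent; the reverse implication is immediate.

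For the forward implication, suppose $h_1^\alpha=h_2$ for some $\alpha\in\Aut(G)$. Then $H^\alpha$ is a Frobenius complement: it is a complement to $\alpha(N)=N$, and the Frobenius property is preserved by automorphisms. Since all Frobenius complements are conjugate, $H^\alpha=H^g$ for some $g\in G$. Now $h_2\in H\cap H^g$ is nontrivial, and distinct conjugates of $H$ intersect trivially, so $H^g=H$. Therefore $\alpha\in N_{\Aut(G)}(H)$ and $\alpha|_H\in A_H$, as in Case~1 of the proof of Theorem~\ref{thm:main2}. Consequently $G\setminus N$ splits into exactly $\omega_{A_H}(H)-1$ orbits, namely the nontrivial $A_H$-orbits on $H$.

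Adding the two counts gives $\omega(G)=\omega_{A_N}(N)+\omega_{A_H}(H)-1$. The only delicate step is the one showing that an automorphism fusing two elements of $H^\#$ can be taken to normalize $H$, without a further conjugation. This is handled because, in a Frobenius group, the unique conjugate of $H$ containing a given nontrivial complement element is $H$ itself, so no conjugation is needed.
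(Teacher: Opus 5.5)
Your proposal is correct and follows essentially the same route as the paper: count the $A_N$-orbits inside the characteristic kernel $N$, conjugate every element of $G\setminus N$ into $H^\#$, and use the trivial intersection of distinct conjugates of $H$ to force $H^\alpha=H$. You are slightly more careful than the paper, which deduces $H^\alpha=H$ directly from the trivial-intersection property, whereas you first justify, via conjugacy of complements of $N$, that $H^\alpha$ is a conjugate $H^g$ of $H$.
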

    
    \begin{proof}
    The number of $\Aut(G)$-orbits contained in $N$ is $ \omega_{A_N}(N). $ Since $G$ is Frobenius, every $\Aut(G)$-orbit contained in $G\setminus N$ therefore contains an element of $H^\#$.

    Let $h_1,h_2\in H^{\#}$. We show that if $ h_1^\alpha=h_2 $ for some $\alpha\in\Aut(G)$, then $h_1,h_2$ belong to the same $A_H$-orbit. Distinct Frobenius complements have trivial intersection, and hence  $ H^\alpha=H. $ Thus $ \alpha\in N_{\Aut(G)}(H), $ and therefore $h_1$ and $h_2$ belong to the same $A_H$-orbit.

    It follows that the number of $\Aut(G)$-orbits contained in $G\setminus N$ is exactly  $ \omega_{A_H}(H)-1, $ and the result follows.
    \end{proof}

    Now we prove the converse of case~(1) in Theorem~\ref{thm:main2}
    
    \begin{corollary}\label{cor:converse-main2}
    Let $G=K\rtimes H$ be a Frobenius group, where
    $K=\gamma_\infty(G)$ is a $p$-group and $H\cong C_q$. Assume that $\omega_{\Aut(G)}(K)=4$ and that $A_H=\{\alpha|_H:\alpha\in N_{\Aut(G)}(H)\}$ acts transitively on $H^\#$. Then
    \[
    \omega(G)=5.
    \]
    \end{corollary}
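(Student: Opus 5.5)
This follows directly from Lemma~\ref{lem:frobenius-orbits}. Since $G=K\rtimes H$ is a Frobenius group with kernel $K$ and complement $H$, the lemma gives
\[
\omega(G)=\omega_{A_K}(K)+\omega_{A_H}(H)-1,
\]
where $A_K=\{\alpha|_K:\alpha\in\Aut(G)\}$. So it suffices to evaluate the two terms on the right.

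\textbf{The kernel term.} We first check that $\omega_{A_K}(K)=\omega_{\Aut(G)}(K)$. The subgroup $K=\gamma_\infty(G)$ is characteristic, so every $\alpha\in\Aut(G)$ restricts to an automorphism of $K$. Two elements of $K$ then lie in the same $\Aut(G)$-orbit exactly when they lie in the same $A_K$-orbit. Hence $\omega_{A_K}(K)=4$ by hypothesis. (For a Frobenius kernel one could also use that $K$ is the Fitting subgroup, but the characteristic property of $\gamma_\infty(G)$ already suffices.)

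\textbf{The complement term.} By hypothesis $A_H$ acts transitively on $H^\#$. The identity is fixed by every automorphism, so the $A_H$-orbits on $H$ are exactly $\{1\}$ and $H^\#$, giving $\omega_{A_H}(H)=2$.

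Substituting into the formula gives $\omega(G)=4+2-1=5$. The only point needing care is the identification of $\omega_{A_K}(K)$ with $\omega_{\Aut(G)}(K)$, and that is settled by $K$ being characteristic. The hypothesis $H\cong C_q$ is not used beyond the Frobenius structure itself.
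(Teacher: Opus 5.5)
Your proof is correct and follows the paper's argument exactly: use that $K=\gamma_\infty(G)$ is characteristic to identify $\omega_{A_K}(K)$ with $\omega_{\Aut(G)}(K)=4$, get $\omega_{A_H}(H)=2$ from transitivity on $H^\#$, and substitute into Lemma~\ref{lem:frobenius-orbits}. Your side remark is also accurate: neither $H\cong C_q$ nor $K$ being a $p$-group is needed for the computation itself.
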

    
    \begin{proof}
    Since $K=\gamma_\infty(G)$ is characteristic in $G$, the number
    of $\Aut(G)$-orbits contained in $K$ is $\omega_{A_K}(K)=\omega_{\Aut(G)}(K)=4$, where $A_K=\{\alpha|_K:\alpha\in\Aut(G)\}$.
    
    Moreover, by hypothesis, $A_H$ acts transitively on $H^\#$, and hence $\omega_{A_H}(H)=2$. Therefore, by Lemma~\ref{lem:frobenius-orbits},  $\omega(G) = \omega_{A_K}(K)+\omega_{A_H}(H)-1 = 4+2-1 = 5$.   
    \end{proof}

    Recall that an $AT$-group is a group all of whose elements of the same order are in the same automorphism orbit (see \cite{Zha92}).

    \begin{rem}
            If $G$ is a Frobenius $AT$-group, then $ \omega(G) = \omega(N)+\omega(H)-1. $
    \end{rem}

    \begin{proof}
        For a Frobenius group, $ \omega(G)=\omega_{A_N}(N)+\omega_{A_H}(H)-1. $ Since $G$ is an AT-group, we conclude that any two elements of $N$ having the same order are conjugate under an automorphism of $G$. Since $N$ is characteristic, such an automorphism induces an element of $A_N$. Hence $ \omega_{A_N}(N)=\omega(N). $ 
        
        Similarly, let $h_1,h_2\in H^\#$ have the same order. There exists $\alpha\in\Aut(G)$ such that $h_1^\alpha=h_2$. Then $ 1\neq h_2\in H^\alpha\cap H. $ Since distinct Frobenius complements intersect trivially, it follows that we have $H^\alpha=H$. Thus $\alpha|_H\in A_H$, and consequently $ \omega_{A_H}(H)=\omega(H). $ Therefore $ \omega(G)=\omega(N)+\omega(H)-1. $ 
    \end{proof}
    
    \begin{lemma}\label{lem:3primes-AT}
        Let $G$ be a finite solvable group such that
        $\omega(G)=5\text{ and }|\pi(G)|=3$. Then $G$ is an $AT$-group.
    \end{lemma}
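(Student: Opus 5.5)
We argue by counting element orders. Write $\pi(G)=\{p,q,r\}$. Automorphisms preserve element orders, so every automorphism orbit consists of elements of a single order. The plan is to show that the number of distinct element orders of $G$ is already $5$. Then each orbit is exactly the set of elements of a given order, which is the $AT$ condition.

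First we list the forced orders. The orbit of the identity gives order $1$. The three primes give elements of orders $p$, $q$ and $r$, by Cauchy's theorem, hence at least three further orbits. As in the proof of Proposition~\ref{prop:properties}, the result of \cite{Luc99} shows that a solvable group with $|\pi(G)|\ge 2$ has an element whose order is divisible by two distinct primes. This gives a fifth orbit, consisting of elements of some mixed order $m$. Since $\omega(G)=5$, these are all the orbits:
\[
G=\{1\}\sqcup O_p\sqcup O_q\sqcup O_r\sqcup O_m .
\]
Here $O_p$, $O_q$ and $O_r$ each lie in the set of elements of prime-power order for the respective prime.

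The key point is that each of these orbits is a full ``order class''.

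\begin{itemize}
\item There is exactly one orbit containing elements whose order is divisible by $p$ only. Hence all $p$-elements lie in $O_p$, so all nontrivial $p$-elements have the same order, and they form one orbit. The same holds for $q$ and $r$.
\item All elements of non-prime-power order lie in the single orbit $O_m$, so they share the common order $m$.
\end{itemize}

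Consequently two elements with the same order lie in the same one of the five orbits, which is precisely the statement that $G$ is an $AT$-group.

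The only nontrivial input is the existence of an element of mixed order, which we take from \cite{Luc99}, exactly as it was used earlier in the paper. Everything else is the pigeonhole observation that five orbits are already accounted for by five pairwise incompatible order types. We expect no real obstacle. The one thing to check carefully is that the mixed-order elements cannot be split between two orbits; this is impossible because only one orbit remains after the four prime-power orbits are accounted for.
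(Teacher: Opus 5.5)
Your argument is correct and is essentially the paper's: the paper also exhibits elements of the five pairwise distinct orders $1,p,q,r$ and one mixed order (it takes $pq$), and concludes that with $\omega(G)=5$ each orbit is a full order class. One correction to your cited input: the fact from \cite{Luc99} is false for $|\pi(G)|\ge 2$ (e.g.\ $S_3$, or $S_4$, which has $\omega(S_4)=5$, has no element of mixed order); what actually holds is that in a solvable group any three primes dividing $|G|$ include two whose product is an element order, and this is exactly where the hypothesis $|\pi(G)|=3$ is used.
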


    \begin{proof}
    Let $\pi(G)=\{p,q,r\},$ where $p$, $q$, and $r$ are distinct primes. Note that $G$ contains elements of orders $p$, $q$, and $r$. By solvability, $G$ contains an element whose order is the product of two distinct primes in $\pi(G)$ (see \cite[Proposition~1]{Luc99}). Without loss of generality, we may assume that $G$ contains an element of order $pq$.
    
    Hence $G$ contains elements of the five distinct orders, say, $1, p, q, r, pq.$  Thus any two elements of $G$ having the same order belong to the same orbit. Hence $G$ is an $AT$-group.
    \end{proof}

    Although a solvable non-$p$-group $G$ with $\omega(G)=6$ still satisfies $|\pi(G)|\leq3$, it need not be an $AT$-group. Indeed, $(C_5\times C_5)\rtimes S_3$ is a $6$-orbit group that is not an $AT$-group.
        
    Using the $AT$ property and the results in \cite{Zha92} we obtain the following proposition. 
    
    \begin{prop}\label{thm:5-orbit-3-primes}
        Let $G$ be a finite solvable group such that $\omega(G)=5\text{ and }|\pi(G)|=3.$ Then $G$ has an elementary abelian  characteristic Sylow $p$-subgroup $P$, and $G=P\rtimes H,$ where $H$ is a Hall $\{q,r\}$-subgroup satisfying $\omega(H)=3\text{ or }\omega(H)=4.$ Moreover, one of the following holds:
        \begin{enumerate}
            \item
            $G=P\rtimes C_{qr}$, where $C_{qr}$ acts fixed-point-freely on $P$. The orbit representatives have orders $1,\ p,\ q,\ r,\ qr.$
            \item
            $G=(P\times Q)\rtimes R$, where $R\cong C_2$ acts fixed-point-freely on both $P$ and $Q$. The orbit representatives have orders $1,\ p,\ q,\ pq,\ 2.$ 
    \end{enumerate}
    \end{prop}
    
    \begin{proof}
    By Lemma~\ref{lem:3primes-AT}, the group $G$ is an $AT$-group, 
    
    By Zhang's description of finite solvable AT-groups \cite{Zha92}, we may write $ G=P\rtimes H, $ where $P$ is an elementary abelian normal Sylow $p$-subgroup, the Sylow subgroups of $H$ are cyclic, and $ |H:F(H)|\leq2. $ 
    
    Suppose first that $H$ is nilpotent. Since its Sylow subgroups are cyclic, we may write $ H=Q\times R, $ where $Q$ and $R$ are cyclic Sylow $q$- and $r$-subgroups, respectively. Since $G$ has exactly five element orders, neither $Q$ nor $R$ can contain elements of order $q^2$ or $r^2$. Consequently, $ Q\cong C_q, R\cong C_r, $ and hence $ H\cong C_q\times C_r\cong C_{qr}. $ 

    Thus $G$ contains elements of orders $ 1,\ p,\ q,\ r,\ qr. $ Since these are already five distinct element orders, no further order can occur. In particular, no nonidentity element of $H$ can centralize a nonidentity element of $P$. Indeed, if $1\neq x\in P$ and $1\neq h\in H$ satisfy $[x,h]=1$, then $xh$ has order divisible by $p$ and by a prime divisor of $|h|$, yielding an additional element order. Therefore  $ C_P(h)=1 $ for every $1\neq h\in H$. Hence $H$ acts fixed-point-freely on $P$, and  $ G=P\rtimes C_{qr}. $  This gives case~(1). Moreover, in this case $\omega(H)=4$.
    
    Suppose now that $H$ is not nilpotent. Then $ |H:F(H)|=2, $ so, after relabelling if necessary, we may assume that $r=2$. Let $R$ be a Sylow $2$-subgroup of $H$. Since the Sylow subgroups of $H$ are cyclic and $G$ has no elements of order $4$, we have $ R\cong C_2. $ Similarly, a Sylow $q$-subgroup $Q$ of $H$ satisfies $ Q\cong C_q. $ Since $H$ is not nilpotent, $ H=Q\rtimes R\cong C_q\rtimes C_2\cong D_{2q}. $ 

    We claim that $[P,Q]=1$. Suppose, to the contrary, that $[P,Q]\neq1$. Since $P$ is characteristic in $G$ and $ F(G/P)=QP/P, $ the subgroup $PQ$ is characteristic in $G$. Hence $ C_P(Q)=C_P(PQ) $ is characteristic in $G$. Since $G$ is an $AT$-group and every nonidentity element of $P$ has order $p$, the set $P^\#$ is a single $\Aut(G)$-orbit. Therefore $ C_P(Q)=1$ or $ C_P(Q)=P. $ The second possibility would imply $[P,Q]=1$, contrary to our assumption. Hence $ C_P(Q)=1. $ 
    
    We may therefore apply Corollary~\ref{cor:counting} to $ G=P\rtimes(Q\rtimes R), $ with $R\cong C_2$. Thus, for some positive integer $d$, $ p^{2d}-1=q(p^d-1), $ and hence $ q=p^d+1. $ Since $p$ and $q$ are odd primes, this is impossible. Therefore $ [P,Q]=1. $ It follows that $$ G=(P\times Q)\rtimes R,\qquad R\cong C_2. $$ Since $P$ and $Q$ commute, $G$ contains elements of order $pq$. Thus the five element orders occurring in $G$ are $ 1,\ p,\ q,\ pq,\ 2. $ 
    
    The subgroup $R$ acts fixed-point-freely on $Q$, since $Q\rtimes R\cong D_{2q}$. We claim that it also acts fixed-point-freely on $P$. Otherwise, there would exist $1\neq x\in C_P(R)$. If $1\neq y\in R$, then $xy$ would have order $2p$, since $p$ is odd. This would give a sixth element order, contradicting $\omega(G)=5$. Hence  $  C_P(R)=1.  $ 
    
    Therefore $ G=(P\times Q)\rtimes C_2, $ where $C_2$ acts fixed-point-freely on both $P$ and $Q$. This is case~(2). Moreover, $H\cong D_{2q}$ has three automorphism orbits, and hence $\omega(H)=3$.

        \end{proof}

Note that all groups in Proposition~\ref{thm:5-orbit-3-primes} have Fitting height 2, so Theorem~\ref{thm:main3} follows.
Now, the combination of Theorem~\ref{thm:5-orbit-2-primes-2-fit}, Proposition~\ref{prop:properties} and Proposition~\ref{thm:5-orbit-3-primes} yields Theorem~\ref{thm:main}.

In the appendix, we present Table~\ref{tab:examples-three-primes} that contais examples of 5-orbit groups illustrating all cases that appear in Theorem~\ref{thm:main3}.

\section{Examples}

The preceding results show that the EPPO and CPPO conditions are quite restrictive when the number of automorphism orbits is small. On the other hand, examples of both types can be constructed for any number of automorphism orbits. 
        
    \begin{ex}
        For every integer $k\geq 3$, there exists a finite non-abelian EPPO-group $G$ such that $\omega(G)=k.$
    \end{ex}
    
    \begin{proof}
    Let $p$ be an odd prime and set $n=k-2$. Consider the dihedral group
    $$ D_{2p^n}=C_{p^n}\rtimes C_2, $$ 
    where  $C_2$ acts by inversion on $C_{p^n}$. Since $p$ is odd, it follows that this action is fixed-point-free on $C_{p^n}$, and hence $D_{2p^n}$ is a Frobenius group with kernel $C_{p^n}$ and complement $C_2$. 
    
    Every element outside $C_{p^n}$ has order $2$, while the elements of $C_{p^n}$ have orders $ 1,p,p^2,\ldots,p^n. $ Thus $G$ is an EPPO-group. Moreover, $G$ is an $AT$-group, since elements of the same order in $C_{p^n}$ are in the same $\Aut(C_{p^n})$-orbit and all elements outside $C_{p^n}$ are thus involutions and form a single automorphism orbit. Therefore, by Lemma~\ref{lem:frobenius-orbits}, $ \omega(D_{2p^n}) = \omega(C_{p^n})+\omega(C_2)-1. $ Since $ \omega(C_{p^n})=n+1 \text{ and } \omega(C_2)=2$, we conclude that $ \omega(D_{2p^n})=n+2=k. $
    \end{proof}

    \begin{ex}
        For every integer $k\geq 5$, there exists a finite non-CPPO-group $G$ such that $\omega(G)=k.$
    \end{ex}

    \begin{proof}
    Set $m=k-3$ and choose distinct primes $p$ and
    $q$ such that
    $p\equiv q\equiv1\pmod m.$ Then there are elements of order $m$ in both $\Aut(C_p)$ and $\Aut(C_q)$, say $\lambda\in\mathbb{F}_p^{\times}$ and $\mu\in\mathbb{F}_q^{\times}$.
        Let 
    $$\langle a\rangle\cong C_p, \quad \langle b\rangle\cong C_q,\quad N=\langle a\rangle\times \langle b\rangle  , \quad H=\langle h\rangle\cong C_m.$$
    Define the action $a^h=a^{\lambda}$ and $b^h=b^{\mu}$. 
    Since every
    nonidentity element of $H$ acts fixed-point-freely on $N$,  
    $G=N\rtimes H$
    is a Frobenius group, which is not CPPO.
    
    Now we compute $\omega(G)$. Note that $\Aut(N)\cong\Aut(C_p)\times\Aut(C_q)$ is abelian. Hence every automorphism of $N$ extends to an automorphism of $G$ fixing $H$. Thus, $\omega_{A_N}(N)=4$. Moreover, since $H$ acts faithfully on $N$ and $\Aut(N)$ is abelian, every automorphism of $G$ normalizing $H$ induces the identity on $H$. Hence
    \[
    \omega_{A_H}(H)=m.
    \]

    By Lemma~\ref{lem:frobenius-orbits}, $\omega(G)=4+m-1=m+3=k$.    \end{proof}

\section*{Acknowledgements}
The author is grateful to Professor Raimundo Bastos for proposing the research problem and for valuable discussions and insights. The author acknowledges financial support from the Fundação de Apoio à Pesquisa do Distrito Federal (FAPDF), project no.~00193.00001270/2025-48. Generative AI tools, including ChatGPT (OpenAI) and Claude (Anthropic), were used for language editing.

\section*{Data availability statement} No datasets were generated or analysed during the current study. The GAP code used for the computational verification in Theorem~\ref{thm:main} and for the computations in Tables~\ref{tab:examples-main2} and \ref{tab:examples-three-primes} is available from the author upon reasonable request.

\newpage
\appendix

\section{Tables}

In this appendix, we present the tables obtained from  GAP computations.

        \begin{table}[ht]
        \centering
        \begin{tabular}{|c |c| c| c| c|}
        \hline
        Case & GAP ID & $G$ & $K=\gamma_\infty(G)$ &
        $\omega_{\Aut(G)}(K)$\\
        \hline
        
        (1) & $[54,1]$ &
        $D_{54}\cong C_{27}\rtimes C_2$ &
        $C_{27}$ & $4$\\
        
        (1) & $[192,3]$ &
        $(C_8\times C_8)\rtimes C_3$ &
        $C_8\times C_8$ & $4$\\
        
        (1) & $[192,1023]$ &
        $((C_2^4\rtimes C_2)\rtimes C_2)\rtimes C_3$ &
        $(C_2^4\rtimes C_2)\rtimes C_2$ & $4$\\
        
        (1) & $[250,1]$ &
        $D_{250}\cong C_{125}\rtimes C_2$ &
        $C_{125}$ & $4$\\
        
        \hline
        
        (2) & $[147,1]$ &
        $C_{49}\rtimes C_3$ &
        $C_{49}$ & $3$\\
        
        (2) & $[320,1012]$ &
        $((C_2^2)\mathbin{.}C_2^4)\rtimes C_5$ &
        $(C_2^2)\mathbin{.}C_2^4$ & $3$\\
        
        (2) & $[324,35]$ &
        $(C_9\times C_9)\rtimes C_4$ &
        $C_9\times C_9$ & $3$\\
        
        (2) & $[605,6]$ &
        $(C_{11}\times C_{11})\rtimes C_5$ &
        $C_{11}\times C_{11}$ & $3$\\
        
        (2) & $[648,253]$ &
        $(C_9\times C_9)\rtimes Q_8$ &
        $C_9\times C_9$ & $3$\\
        
        \hline
        
        (3) & $[20,3]$ &
        $C_5\rtimes C_4$ &
        $C_5$ & $2$\\
        
        (3) & $[100,11]$ &
        $(C_5\times C_5)\rtimes C_4$ &
        $C_5\times C_5$ & $2$\\
        
        (3) & $[400,52]$ &
        $C_2^4\rtimes C_{25}$ &
        $C_2^4$ & $2$\\
        
        (3) & $[676,10]$ &
        $(C_{13}\times C_{13})\rtimes C_4$ &
        $C_{13}\times C_{13}$ & $2$\\
        
        \hline
        \end{tabular}
        \caption{Examples illustrating the three cases of
        Theorem~\ref{thm:main2}.}
        \label{tab:examples-main2}
        \end{table}
    \begin{table}[ht]
\centering
\begin{tabular}{|c |c |c |c |c |c|}
\hline
Case & GAP ID & Structure of $G$ & $P$ & $H$ & Orbit orders \\
\hline
(1) 
& $[150,6]$ 
& $(C_5\times C_5)\rtimes C_6$ 
& $C_5\times C_5$ 
& $C_6$ 
& $1,5,2,3,6$ \\

(2) 
& $[30,3]$ 
& $(C_3\times C_5)\rtimes C_2$ 
& $C_3$ 
& $C_5\rtimes C_2$ 
& $1,3,5,15,2$ \\
\hline
\end{tabular}
\caption{Examples illustrating the two cases of Theorem~\ref{thm:main3}.}
\label{tab:examples-three-primes}
\end{table}

\end{document}